\numberwithin{equation}{section}
\theoremstyle{plain}
\newtheorem{thm}{Theorem}[section]
\newtheorem{lemma}[thm]{Lemma}
\newtheorem{prop}[thm]{Proposition}
\theoremstyle{definition}
\newtheorem{remark}[thm]{Remark}
\begin{document}

\title{On Liftings of Projective Indecomposable $G_{(1)}$-Modules}

\author[Paul Sobaje]
{Paul Sobaje}

\begin{abstract}
Let $G$ be a simple simply connected algebraic group over an algebraically closed field $k$ of characteristic $p$, with Frobenius kernel $G_{(1)}$.  It is known that when $p\ge 2h-2$, where $h$ is the Coxeter number of $G$, the projective indecomposable $G_{(1)}$-modules (PIMs) lift to $G$, and this has been conjectured to hold in all characteristics.  In this paper, we explore the lifting problem via extensions of algebraic groups, following the work of Parshall and Scott who in turn build upon ideas due to Donkin.  We prove various results which augment this approach, and as an application demonstrate that the PIMs lift to $G_{(1)}H$, for particular closed subgroups $H \le G$ which contain a maximal torus of $G$.
\end{abstract}

\maketitle

\section{Introduction}

Let $G$ be a simple simply connected algebraic group over an algebraically closed field $k$ of characteristic $p$, with Frobenius kernel $G_{(1)}$.  A conjecture by Humphreys and Verma, at this point more than 40 years old, states that the projective indecomposable $G_{(1)}$-modules (PIMs) should lift to $G$.  Donkin later added to this by conjecturing that each PIM should come via restriction of a certain indecomposable tilting module for $G$.  These conjectures are known to be true if $p\ge 2h-2$, where $h$ is the Coxeter number of $G$, but remain open for general $p$ (see Section 2.2).

Parshall and Scott advanced this theory in recent years, proving a stable version of the Humphreys-Verma conjecture \cite{PS2}.  That is, for a given projective indecomposable $G_{(1)}$-module $Q$, they show that there is an integer $n \ge 1$ such that $Q^{\oplus n}$ has a $G$-structure.  Moreover, they detail in \cite{PS} how the lifting problem can be transformed into a problem about algebraic group extensions of $G$ by a connected unipotent group $U_A$, building on constructions due to Donkin \cite{D1}.

In this paper, we seek to better understand the lifting problem from the group extension point of view.  In Section 2 we recall various results pertaining to group extensions and cohomology, in many instances presenting them in a manner tailored to our specific needs.  We then prove several results in Sections 3 and 4 which augment the group extension approach to the lifting problem.

It is well known in all characteristics that the PIMs lift to $G_{(1)}T$, where $T$ is a maximal torus of $G$.  In Section 5, we show that the results in Sections 3 and 4 can be used to improve upon this.  First, we show that the PIMs lift to the subgroup scheme $G_{(1)}N_{W^{\prime}}$ where $N_{W^{\prime}}$ is a subgroup of the normalizer group $N_G(T)$ for which the finite group $N_{W^{\prime}}/T$ has order relatively prime to $p$.  In particular, if $p$ does not divide the order of the Weyl group of $G$, then the PIMs lift to $G_{(1)}N_G(T)$ (over which they are still injective modules).  Second, we show that if $G=SL_n$ and $p \ge n-2$ then the PIMs can be lifted to $G_{(1)}L$-modules when $L$ is a rank $1$ Levi subgroup of $G$.

These applications fall far short of the ultimate goal of lifting the PIMs to $G$, but it should be noted that progress of any kind on this problem has proven quite elusive over the last few decades.  Moreover, we believe that the methods considered are of theoretical interest in their own right.

We also note that the lifting conjecture is often formulated more generally for an arbitrary Frobenius kernel $G_{(r)}$ of $G$, where $r \ge 1$.   However, as observed by Jantzen in \cite[II.11.16 Remark (2)]{J1}, if the PIMs for $G_{(1)}$ lift to $G$, then the PIMs for $G_{(r)}$ lift as well.

In addition to the aforementioned works by Parshall and Scott \cite{PS} and Donkin \cite{D1}, our results and methods most heavily depend upon the papers by McNinch (\cite{M} and \cite{M2}) and Stewart \cite{St} on cohomology and group extensions, and the computations of Bendel, Nakano, and Pillen \cite{BNP} on second cohomology groups.

\subsection{Acknowledgments} We wish to thank Eric Friedlander, Dan Nakano, George McNinch, and Jens Jantzen for helpful discussions and correspondences.


\section{Preliminaries/Recollections}

\subsection{Notation} Throughout $G$ will be a simple simply connected algebraic group over $k$.  We fix a maximal torus $T \le G$, and let $N_G(T)$ be the normalizer of $T$ in $G$.  The Weyl group $W$ denotes the finite group $N_G(T)/T$, while $\Phi$ is the root system of $G$ with respect to $T$.  For each $\alpha \in \Phi$ we fix a root homomorphism $\varphi_{\alpha}: \mathbb{G}_a \rightarrow G$, the image of which we denote by $U_{\alpha}$.  We fix a Borel subgroup $B \le G$ so that the positive roots $\Phi^+$ are those $\alpha$ for which $U_{\alpha} \le B$.  This also determines our choice of simple roots $\Pi \subseteq \Phi$.  For each $I \subseteq \Pi$, we let $L_I$ denote the associated standard Levi subgroup.

We let $X(T)$ be the character group of $T$, $X(T)_+$ the set of dominant weights, and $X_1(T)$ the set of $p$-restricted weights.  Let $\rho = 1/2(\sum_{\alpha \in \Phi^+} \alpha)$.  The action of $W$ on $T$ induces an action on $X(T)$, and for each $w \in W$ and $\lambda \in X(T)$ we will denote this action by $w\lambda$ or $w(\lambda)$.  The dot action is $w \cdot \lambda = w(\lambda + \rho) - \rho$.  The element $w_0 \in W$ is the unique one with the property that $w_0(\Phi^+) = -\Phi^+$.

We will be working within the category of affine group schemes over $k$.  If $H$ is such a group scheme, then it is a functor which for each commutative $k$-algebra $R$ returns the group $H(R)$.  The coordinate algebra of $H$ is denoted $k[H]$.  When we refer to $H$ as an algebraic group, this implies that $k[H]$ is reduced (nilpotent-free).  In such cases, we might write $h \in H$ as shorthand for $h \in H(k)$.

If $H$ is an algebraic group, we denote its Lie algebra by $\mathfrak{h}$.  It is a restricted Lie algebra with restricted enveloping algebra $u(\mathfrak{h})$.  We let $H_{(r)}$ denote the $r$-th Frobenius kernel of $H$.  There is an equivalence between representations for $H_{(1)}$ and $u(\mathfrak{h})$, and $\text{Hom}_k(k[H_{(1)}],k) \cong u(\mathfrak{h})$ \cite[I.9.6(4)]{J1}.

As each $G_{(r)}$ is a normal in $G$, for any closed subgroup scheme $H \le G$ we can form the subgroup scheme $G_{(r)}H \le G$ \cite[I.6.2]{J1}.  In particular we will consider this for $H$ an algebraic subgroup, and will $G_{(r)}H$ an \textit{infinitesimal thickening} of $H$.


We set our notation for important classes of modules, the notation exactly following that given in \cite{J1}.  For every $\lambda \in X(T)_+$ there is a unique simple $G$-module with highest weight $\lambda$ which we denote by $L(\lambda)$.  For every $\lambda \in X(T)$ there is a unique simple $G_{(1)}T$-module $\widehat{L}_1(\lambda)$ with highest weight $\lambda$, this module remains simple upon restriction to $G_{(1)}$ over which we denote it as $L_1(\lambda)$.  The simple modules for $G$ and $G_{(1)}T$ are indexed by the sets with $X(T)_+$ and $X(T)$ respectively, while the simple $G_{(1)}$-modules are in bijection with $X_1(T)$, which is itself in bijection with $X(T)/pX(T) \cong X(T_{(1)})$ via the natural map from $X(T)$ to $X(T_{(1)})$.  Every $\lambda \in X(T)$ can be uniquely written as $\lambda = \lambda_0 + p\mu$, where $\lambda_0 \in X_1(T)$ and $\mu \in X(T)$.  If we let $p\mu$ also denote the one-dimensional $G_{(1)}T$-module on which $G_{(1)}$ acts trivially and $T$ acts via $p\mu$, we have a $G_{(1)}T$-isomorphism $\widehat{L}_1(\lambda) \cong \widehat{L}_1(\lambda_0) \otimes p\mu$. 

For each $\lambda \in X_1(T)$, we let $Q_1(\lambda)$ denote the projective indecomposable $G_{(1)}$-module with unique simple quotient $L_1(\lambda)$.  The socle of $Q_1(\lambda)$ is simple, and is also isomorphic to $L_1(\lambda)$.  Let $\widehat{Q}_1(\lambda)$ be the $G_{(1)}T$-projective cover of $\widehat{L}_1(\lambda)$.  Then it is also the injective hull of $\widehat{L}_1(\lambda)$, and $\widehat{Q}_1(\lambda) \cong Q_1(\lambda)$ over $G_{(1)}$.  If $\mu$ is a weight of $\widehat{Q}_1(\lambda)$, then it is known that $\lambda - 2(p-1)\rho \le \mu \le 2(p-1)\rho + w_0\lambda$.

In general, for any $\mu \in X(T)$ we write $\widehat{Q}_1(\lambda + p\mu)$ for the $G_{(1)}T$-projective cover of $\widehat{L}_1(\lambda + p\mu)$.  We have $\widehat{Q}_1(\lambda + p\mu) \cong \widehat{Q}_1(\lambda) \otimes p\mu$.

For each $\lambda \in X(T)_+$, $T(\lambda)$ will denote the indecomposable tilting module of highest weight $\lambda$.

\subsection{The Conjectures of Humphreys-Verma and Donkin}

It was first conjectured by Humphreys and Verma in \cite{HV} that there should be a $G$-structure for each $G_{(1)}$-module $Q_1(\lambda)$, at which point the conjectured statement had already been shown to be true for $SL_2$ by Jeyakumar \cite{Je}.  We have recalled above that $Q_1(\lambda)$ can always be lifted (non-uniquely) to $G_{(1)}T$.  If it can further be lifted to $G$, then every $G$-module structure must have a simple socle which is isomorphic to $L(\lambda)$.  Therefore, there is only one $G_{(1)}T$-lift of $Q_1(\lambda)$ which could possibly have a $G$-structure, namely $\widehat{Q}_1(\lambda)$.  So the conjecture can equivalently be stated to say that the $G_{(1)}T$-module $\widehat{Q}_1(\lambda)$ lifts to $G$.

Donkin furthered this conjecture by proving in \cite{D1} that there exists some $G$-module $M$ whose formal character is the same as that of $\widehat{Q}_1(\lambda)$ (a necessary condition for the conjecture to hold).  He later conjectured that one $G$-module structure for $\widehat{Q}_1(\lambda)$ is that of the indecomposable tilting module $T(2(p-1)\rho + w_0(\lambda))$.  We note that neither the Humphreys-Verma conjecture, nor this refinement, say anything about the uniqueness of such a $G$-structure, although uniqueness is known to hold in the cases in which both conjectures have been shown to be true.

When $p \ge 2h-2$, then Donkin's conjecture holds, and this is the unique $G$-structure on $\widehat{Q}_1(\lambda)$.  This result is due to Jantzen \cite[II.11.11]{J1}, who improved an earlier result by Ballard \cite{B} which established lifting when $p \ge 3h-3$.  It is also known that in all characteristics the Humphreys-Verma conjecture holds for $SL_3$, though it is unclear if there are multiple non-isomorphic structures (see end of \cite[II.11.16]{J1}).

Parshall and Scott proved in \cite{PS2} that there is always some integer $n \ge 1$ for which $Q_1(\lambda)^{\oplus n}$ lifts to $G$.

\subsection{From Lifting Representations to Group Extensions}

Parshall and Scott in \cite{PS} show how the lifting problem can be converted into a problem about extensions of algebraic groups, building on constructions due to Donkin.  In fact, the authors develop this theory for a more general class of modules than we consider here.

Let $\lambda \in X_1(T)$, and fix a decomposition of $T(2(p-1)\rho + w_0\lambda)$ into indecomposable $G_{(1)}$-submodules.  It is known that $Q_1(\lambda)$ occurs exactly once as a summand of this decomposition and that its $G_{(1)}$-socle is a $G$-submodule of $T(2(p-1)\rho + w_0\lambda)$ (as follows from \cite[II.11.9(3)]{J1} and a result due to Pillen \cite[Theorem 2.5]{D2}).  In \cite{D1}, Donkin defined a variety morphism
$$\sigma: G \rightarrow GL(Q_1(\lambda)), \quad \sigma(g).v = \text{pr}_{Q_1(\lambda)}(g.v),$$
where $\text{pr}_{Q_1(\lambda)}$ is the projection of $T(2(p-1) + w_0\lambda)$ onto $Q_1(\lambda)$ given by the fixed $G_{(1)}$-decomposition.

Let $A = \text{End}_{G_{(1)}}(Q_1(\lambda))$.  Since $Q_1(\lambda)$ is indecomposable over $G_{(1)}$, $A$ is isomorphic as an algebra to $k \oplus J_A$, where $k$ is the center of $\text{End}_{k}(Q_1(\lambda))$ and $J_A$ consists of nilpotent matrices.  In particular, each $X \in J_A$ annihilates the socle of $Q_1(\lambda)$.  Let $U_A = 1+J_A$.  This is a closed unipotent subgroup of $GL(Q_1(\lambda))$.  Donkin showed that $U_A$ and $\sigma(G)$ generate a closed subgroup $G^* \le GL(Q_1(\lambda))$ which extends the representation of $G_{(1)}$.  Additionally, $U_A$ is normal in $G^*$, and the composite of morphisms
$$G \xrightarrow{\sigma} G^* \rightarrow G^*/U_A$$
is an epimorphism of algebraic groups (note that $\sigma$ is only a variety morphism in general).

Parshall and Scott augmented this approach by creating a somewhat more natural group which is actually an extension of $G$ (as opposed to $G^*$ which is an extension of a group isogenous to $G$).  Specifically, $\sigma$ defines morphisms
$$\gamma: G \times G \rightarrow U_A, \quad \kappa: G \times U_A \rightarrow U_A$$
satisfying various properties which allow for the construction of a group $G^{\diamond}$ which is an extension of $G$ by $U_A$ (see \cite[2.2]{PS}).  In the language of the next subsection, there is a strictly exact sequence
$$1 \rightarrow U_A \rightarrow G^{\diamond} \xrightarrow{\pi} G \rightarrow 1$$
so that $G^{\diamond} \cong G \times U_A$ as a variety.  We note that if $U_A$ is abelian, then because it is also $G$-equivariantly isomorphic to its Lie algebra, the morphism $\gamma$ would represent an element in the rational cohomology group $H^2(G,U_A)$, in which case one would construct $G^{\diamond}$ in the standard way (see the following subsection).  However, Parshall and Scott require a more involved approach due to the fact that $U_A$ is not necessarily abelian.

They define a representation $\sigma^{\diamond}: G^{\diamond} \rightarrow GL(Q_1(\lambda))$ given by $\sigma^{\diamond}(g,u).v=\sigma(g)u.v$.  There is a morphism of affine group schemes
$$j_{G_{(1)}}:G_{(1)} \rightarrow G^{\diamond}$$
so that $\pi \circ j_{G_{(1)}}$ is the identity morphism on $G_{(1)}$.  The subgroup scheme $j_{G_{(1)}}(G_{(1)})$ is normal in $G^{\diamond}$, with $U_A$ acting trivially on it, and there is a group scheme isomorphism
$$G^{\diamond}_{(1)} \cong j_{G_{(1)}}(G_{(1)}) \times U_{A,(1)}.$$
The morphism $\sigma^{\diamond} \circ j_{G_{(1)}}$ gives the original representation of $G_{(1)}$ on $Q_1(\lambda)$, and $\sigma^{\diamond}(G^{\diamond})=G^*$.

\begin{thm}\label{sectiontolift}
The $G_{(1)}$-structure on $Q_1(\lambda)$ lifts to $G$ if and only if there is a morphism of algebraic groups $j: G \rightarrow G^{\diamond}$ extending the morphism $j_{G_{(1)}}$.  If there are two such morphisms, $j_1$ and $j_2$, then they define the same $G$-module (up to isomorphism) if and only if there is some $h \in G^{\diamond}$ such that $j_1(g) = hj_2(g)h^{-1}$ for all $g \in G$.
\end{thm}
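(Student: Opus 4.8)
The plan is to exploit the representation $\sigma^{\diamond}\colon G^{\diamond}\to GL(Q_1(\lambda))$ together with the projection $\pi\colon G^{\diamond}\to G$. The crucial structural observation is that $\sigma^{\diamond}$ restricts to the inclusion $U_A\hookrightarrow GL(Q_1(\lambda))$ on the normal subgroup $U_A=\ker\pi$ (since $\sigma^{\diamond}(1,u)=\sigma(1)u=u$, using $\sigma(1)=\mathrm{id}$), so $\sigma^{\diamond}|_{U_A}$ is a closed immersion and $\ker\pi\cap\ker\sigma^{\diamond}=1$. Consequently a morphism into $G^{\diamond}$ is determined by its composites with $\pi$ and $\sigma^{\diamond}$: if two morphisms into $G^{\diamond}$ agree after applying both $\pi$ and $\sigma^{\diamond}$, their pointwise ratio lies in $\ker\pi\cap\ker\sigma^{\diamond}=1$, forcing them to coincide. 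This rigidity will be used repeatedly. The reverse implication of the biconditional is then immediate: given $j\colon G\to G^{\diamond}$ extending $j_{G_{(1)}}$, the composite $\rho:=\sigma^{\diamond}\circ j$ is a homomorphism of algebraic groups whose restriction to $G_{(1)}$ is $\sigma^{\diamond}\circ j_{G_{(1)}}$, i.e. the original action, so $\rho$ is a $G$-lift of $Q_1(\lambda)$.

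For the forward implication, suppose $\rho\colon G\to GL(Q_1(\lambda))$ is a lift and set $u(g):=\sigma(g)^{-1}\rho(g)$, a variety morphism $G\to GL(Q_1(\lambda))$. First I would check that $u(g)$ lands in $U_A$. Both $\sigma(g)$ and $\rho(g)$ carry the $G_{(1)}$-module $Q_1(\lambda)$ isomorphically onto its conjugate twist ${}^{g}Q_1(\lambda)$: for $\rho$ this is the equivariance $\rho(g)\rho(x)\rho(g)^{-1}=\rho({}^{g}x)$, while for Donkin's $\sigma$ it follows from $G_{(1)}\trianglelefteq G$ together with the $G_{(1)}$-equivariance of $\mathrm{pr}_{Q_1(\lambda)}$. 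Hence $u(g)$ is a $G_{(1)}$-automorphism, i.e. $u(g)\in A^{\times}=k^{\times}\cdot U_A$. To see the scalar component is trivial, I would restrict to the simple socle $S\cong L_1(\lambda)$: it is stable under both $G$-structures (being the $G_{(1)}$-socle and $G_{(1)}\trianglelefteq G$), and on $S$ both $\sigma$ and $\rho$ realize a $G$-structure on the $G_{(1)}$-module $L_1(\lambda)$. Since $G$ is semisimple and simply connected, $X(G)=0$, so such a $G$-structure is unique and $\sigma(g)|_S=\rho(g)|_S$. As $J_A$ annihilates $S$, the element $u(g)$ acts on $S$ by its scalar part, which is therefore $1$; thus $u(g)\in U_A$. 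Now define $j(g):=(g,u(g))\in G^{\diamond}$, a variety morphism with $\pi\circ j=\mathrm{id}_G$ and $\sigma^{\diamond}\circ j=\rho$. Because $\rho$ and $\mathrm{id}_G$ are homomorphisms, the rigidity above forces $j$ to be a homomorphism, and comparing with $j_{G_{(1)}}$ (which also satisfies $\pi\circ j_{G_{(1)}}=\mathrm{id}$ and $\sigma^{\diamond}\circ j_{G_{(1)}}=$ the original action) shows that $j$ extends $j_{G_{(1)}}$.

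For the final statement, write $\rho_i:=\sigma^{\diamond}\circ j_i$. If $j_1(g)=hj_2(g)h^{-1}$ for some $h\in G^{\diamond}$, then $\sigma^{\diamond}(h)$ conjugates $\rho_2$ to $\rho_1$, so the two modules coincide up to isomorphism. Conversely, a $G$-isomorphism $\rho_2\cong\rho_1$ is given by some $T\in GL(Q_1(\lambda))$; since $\rho_1,\rho_2$ restrict to the same $G_{(1)}$-action, $T$ commutes with it, so $T\in A^{\times}$, and rescaling by its central scalar part we may take $T\in U_A$. Viewing $h:=(1,T)\in U_A\le G^{\diamond}$, we have $\sigma^{\diamond}(h)=T$ and $\pi(h)=1$, so $hj_2(g)h^{-1}$ and $j_1(g)$ have the same image under both $\pi$ and $\sigma^{\diamond}$; the rigidity then yields $j_1(g)=hj_2(g)h^{-1}$.

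The main obstacle I anticipate is the forward direction, specifically the passage from $u(g)\in A^{\times}$ to $u(g)\in U_A$: the triviality of the scalar genuinely uses the structure theory, namely the uniqueness of a $G$-structure on a simple $G_{(1)}$-module, equivalently $X(G)=0$. A secondary technical point is that $G_{(1)}$ is infinitesimal, so the equalities of morphisms restricted to $G_{(1)}$ and the rigidity arguments must be carried out functorially, on the level of Hopf algebras, rather than on $k$-points alone; since $U_A$ is reduced and $\sigma^{\diamond}|_{U_A}$ is a closed immersion, these scheme-theoretic checks are routine.
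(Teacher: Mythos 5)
Your proposal is correct in substance, but it cannot be compared line-by-line with the paper's own proof, because the paper does not actually prove this theorem: it cites Corollary 3.7 and Remark 3.8 of \cite{PS}, where the statement is established inside Parshall--Scott's general framework (their construction via the morphisms $\gamma$ and $\kappa$, for a wider class of modules than the PIMs). What you give is a self-contained direct argument, and its skeleton is sound: the pair $(\pi,\sigma^{\diamond})$ has trivial scheme-theoretic kernel (since $\ker\pi=U_A$ and $\sigma^{\diamond}|_{U_A}$ is the closed embedding $1+J_A\hookrightarrow GL(Q_1(\lambda))$), and this joint faithfulness is exactly the rigidity needed both to upgrade the variety section $g\mapsto(g,\sigma(g)^{-1}\rho(g))$ to a homomorphism extending $j_{G_{(1)}}$ (the identity on $k$-points of $G\times G$ suffices since $G\times G$ is reduced, and the extension property over the infinitesimal $G_{(1)}$ follows from the monomorphism property of $(\pi,\sigma^{\diamond})$, as you note), and to reduce the conjugacy statement to conjugation by an element of $U_A$. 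The identification $A^{\times}=k^{\times}\cdot U_A$ comes from the indecomposability of $Q_1(\lambda)$ exactly as in Section 2.3. Your route buys a transparent, elementary proof whose only global inputs are the setup of Section 2.3 and $X(G)=0$; the paper's citation buys generality.

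One step needs a sharper justification. When you kill the scalar component of $u(g)=\sigma(g)^{-1}\rho(g)$ by restricting to the socle $S$, you need $\sigma|_S$ to be an actual group homomorphism, and your parenthetical reason ($S$ is the $G_{(1)}$-socle and $G_{(1)}\trianglelefteq G$) only establishes stability of $S$ under $\rho(G)$. For $\sigma$ you must invoke the fact recorded in Section 2.3 (due to Pillen and \cite[II.11.9(3)]{J1}) that $S$ is a $G$-submodule of $T(2(p-1)\rho+w_0\lambda)$; then $\sigma(g)|_S=g|_S$, so $\sigma|_S$ is a genuine $G$-structure on $L_1(\lambda)$ and your uniqueness argument via $X(G)=0$ applies. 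Alternatively, you can avoid the socle entirely: $g\mapsto$ (scalar part of $u(g)$) is an invertible regular function on the connected semisimple group $G$ taking the value $1$ at the identity, hence by Rosenlicht's unit theorem it is a character of $G$, hence trivial. With either repair your argument is complete.
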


\begin{proof}
Corollary 3.7 and Remark 3.8 in \cite{PS}.
\end{proof}

Parshall and Scott observe in \cite[Theorem 3.6]{PS} that the group $G^{\diamond}$ is independent of the choice of the variety morphism $\sigma$.  In particular, we may choose the decomposition of $T(2(p-1)\rho + w_0\lambda)$ in the definition of $\sigma$ to be a $G_{(1)}T$-decomposition, so that a single summand is isomorphic to $\widehat{Q}_1(\lambda)$.  In this case, for all $t \in T, v \in Q_1(\lambda)$, and $w \in T(2(p-1)\rho + w_0\lambda)$ we have
$$t.v = \sigma(t).v, \quad \text{pr}_{Q_1(\lambda)}(t.w) = t.(\text{pr}_{Q_1(\lambda)}(w)),$$
hence for all $g \in G,t \in T$, and $v \in Q_1(\lambda)$,
$$\sigma(gt).v=\text{pr}_{Q_1(\lambda)}(gt.v)=\text{pr}_{Q_1(\lambda)}(g.(t.v))=\sigma(g)\sigma(t).v,$$
and
$$\sigma(tg).v=\text{pr}_{Q_1(\lambda)}(tg.v)=t.(\text{pr}_{Q_1(\lambda)}(g..v))=\sigma(t)\sigma(g).v.$$
We see that $\sigma(tgt^{-1}) = \sigma(t)\sigma(g)\sigma(t)^{-1}$.  We may therefore assume that there is an algebraic section $G \rightarrow G^{\diamond}$ which is a group homomorphism on $T$, and for which the image is stable under conjugation by $T$.

Let $K \le T$ be the kernel of the homomorphism $\sigma\mid_T:T \rightarrow GL(Q_1(\lambda))$.  We then see:

\begin{lemma}\label{kernel}
The kernel of the map
$$\sigma^{\diamond}: G^{\diamond} \rightarrow G^*$$
is $K \times \{1\}$.
\end{lemma}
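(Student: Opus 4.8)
The plan is to establish the two inclusions $K \times \{1\} \subseteq \ker\sigma^{\diamond}$ and $\ker\sigma^{\diamond} \subseteq K \times \{1\}$ separately, working in the variety identification $G^{\diamond} \cong G \times U_A$ supplied by the fixed algebraic section. By the discussion preceding the statement this section is a group homomorphism on $T$ and satisfies $\sigma^{\diamond}(g,1) = \sigma(g)$, while on $U_A$ the map $\sigma^{\diamond}$ is the inclusion into $GL(Q_1(\lambda))$. The forward inclusion is then immediate: for $t \in K$ we have $\sigma(t) = \sigma\mid_T(t) = 1$ by definition of $K$, so $\sigma^{\diamond}(t,1) = \sigma(t) = 1$, and since the section is a homomorphism on $T \supseteq K$ this shows $K \times \{1\}$ lies in the kernel.

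For the reverse inclusion, suppose $(g,u) \in \ker\sigma^{\diamond}$, so that $\sigma(g)u = 1$ in $GL(Q_1(\lambda))$ and hence $\sigma(g) = u^{-1} \in U_A$. The crucial step is to force $g$ into $T$. Composing $\sigma^{\diamond}$ with the quotient map $G^* \to G^*/U_A$ and recalling that $\sigma(g) \in U_A$, we find that $g$ lies in the kernel of the epimorphism $\overline{\sigma}: G \to G^*/U_A$ furnished by Donkin's construction. Since $\overline{\sigma}$ is a nontrivial epimorphism of algebraic groups onto a group isogenous to $G$, its kernel is a proper closed normal subgroup of $G$, hence finite (as $G$ is simple) and therefore central (being a finite normal subgroup of the connected group $G$); thus $\ker\overline{\sigma}$ is contained in the center $Z(G) \subseteq T$, and $g \in T$. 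Now the restriction $\sigma\mid_T$ is a genuine representation, so $\sigma(g) = \sigma\mid_T(g)$ is semisimple, lying in the diagonal torus of $GL(Q_1(\lambda))$ determined by the weight space decomposition of $Q_1(\lambda)$. But $\sigma(g) = u^{-1}$ is unipotent. As an element that is simultaneously semisimple and unipotent is trivial, we get $\sigma(g) = 1$, whence $u = 1$ and $g \in \ker(\sigma\mid_T) = K$, i.e.\ $(g,u) \in K \times \{1\}$.

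I expect the middle step -- confirming that $g$ must lie in $T$ -- to be the main obstacle, since this is precisely where one must exploit that $G^{\diamond}$ is built as an extension of $G$ itself and that $G \to G^*/U_A$ is an honest epimorphism, and where the simplicity of $G$ is needed to pin $\ker\overline{\sigma}$ inside the central torus. The semisimple-versus-unipotent dichotomy then closes the argument cleanly. Finally, to obtain the stated identification of group schemes (not merely of $k$-points) one reruns the same computation on $R$-points for an arbitrary $k$-algebra $R$; the only point needing care is the last clash, which becomes the statement that the intersection $D \cap U_A$ of the diagonal torus $D$ with the unipotent group $U_A$ is the trivial group scheme, being a subgroup scheme that is simultaneously of multiplicative type and unipotent.
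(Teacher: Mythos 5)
Your proof is correct and follows essentially the same route as the paper's: both establish the forward inclusion from $\sigma(t)=1$ for $t\in K$, and both obtain the reverse inclusion by observing that $\sigma(g)\in U_A$ forces $g$ into the (central, hence contained in $T$) kernel of the composite $G \to G^* \to G^*/U_A$ using simplicity of $G$, and then conclude via the semisimple-versus-unipotent clash that $\sigma(g)=1=u$. Your additional elaborations (finiteness/centrality of the kernel, and the scheme-theoretic remark about $R$-points) merely flesh out steps the paper asserts in one line.
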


\begin{proof}
By definition, $\sigma^{\diamond}(g,u)=\sigma(g)u$, hence if $(g,u) \in ker(\sigma^{\diamond})$, then $\sigma(g)=u^{-1}$.  Since $\sigma(g)=1$ for all $g \in K$, we see that $K \times \{1\} \subseteq ker(\sigma^{\diamond})$.  On the other hand, if $(g,u) \in ker(\sigma^{\diamond})$, then $\sigma(g) \in U_A$, so that $g$ maps to $1$ in the composite homomorphism
$$G \xrightarrow{\sigma} G^* \rightarrow G^*/U_A.$$
Since $G$ is a simple algebraic group, the kernel of this morphism is a central subgroup contained in $T$.  But $\sigma\mid_T$ is an algebraic group homomorphism, so that $\sigma(g)$ is a semisimple element in $G^*$, hence we must have $\sigma(g)=1=u$, so that $K \times \{1\} = ker(\sigma^{\diamond})$.
\end{proof}

\subsection{Extensions of Algebraic Groups and Cohomology}

Let $\mathcal{G}$ be a linear algebraic group, and $\mathcal{U}$ a connected unipotent group.  Suppose we have an exact sequence of algebraic groups
$$1 \rightarrow \mathcal{U} \xrightarrow{i} H \xrightarrow{\pi} \mathcal{G} \rightarrow 1$$
such that $\pi$ induces an isomorphism of algebraic groups $H/\mathcal{U} \cong \mathcal{G}$.  This is referred to as being a \textit{strictly exact} sequence.  Since $k$ is algebraically closed and $\mathcal{U}$ is connected unipotent, there always exists an algebraic section $j: \mathcal{G} \rightarrow H$ such that $\pi \circ j$ is the identity morphism on $\mathcal{G}$ (see \cite[2.2]{M}).  Based on Theorem \ref{sectiontolift}, we need to know when we may choose an algebraic section which is a morphism of algebraic groups.  If such a morphism exists then we say the extension is split.

Suppose first that $\mathcal{U}$ is a vector group (isomorphic to $\mathbb{G}_a^{\times n}$ for some $n$).  The following is detailed in \cite[Section 4.4]{M}.  The conjugation action of $H$ on $\mathcal{U}$ factors through $\mathcal{G}$, so that $\mathcal{G}$ acts via group automorphisms on $\mathcal{U}$.  The section $j$ gives rise to a variety morphism
$$\beta_j: \mathcal{G} \times \mathcal{G} \rightarrow \mathcal{U}, \quad \beta_j(g_1,g_2) = j(g_2)^{-1}j(g_1)^{-1}j(g_1g_2).$$
If the action of $\mathcal{G}$ on $\mathcal{U}$ is $k$-linear (so that $\mathcal{U}$ is $\mathcal{G}$-equivariantly isomorphic to a rational $\mathcal{G}$-module $V$), then $\beta_j$ is a $2$-cocycle in the Hochschild complex for the $\mathcal{G}$-module $\mathcal{U}\cong V$.  The cohomology class $[\beta_j] \in H^2(\mathcal{G},\mathcal{U})$ is independent of the choice of the section $j$, and we obtain a bijection between equivalence classes of strictly exact extensions of $\mathcal{G}$ by $\mathcal{U}$ for which the action of $\mathcal{G}$ on $\mathcal{U}$ is $k$-linear and the group $H^2(\mathcal{G},\mathcal{U})$.  The extension is split if and only if $[\beta_j]=0$.

If the extension is split, then there is a complement to $\mathcal{U}$ in $H$, so that $H \cong \mathcal{U} \rtimes \mathcal{G}$.  An implication of \cite[4.5.2]{M} is that there is a single $H$-conjugacy class of such complements if $H^1(\mathcal{G},\mathcal{U})=0$.

These results can be applied to extensions of $\mathcal{G}$ by $\mathcal{U}$ when the latter is not a vector group which is $\mathcal{G}$-equivariantly isomorphic to a rational $\mathcal{G}$-module if one can find an algebraic group filtration of $\mathcal{U}$ with quotients which meet this criteria.  Independently, McNinch \cite[Theorem B]{M2} and Stewart \cite[Theorem 3.3.5]{St} have shown that this is always possible given the assumptions in our paper.

We will now recall this result for the strictly exact extension
$$1 \rightarrow U_A \rightarrow G^{\diamond} \xrightarrow{\pi} G \rightarrow 1$$
defined above.  Here we can formulate a slightly stronger statement thanks to the $G^{\diamond}$-equivariant isomorphism between $U_A = 1 + J_A$ and $\text{Lie}(U_A)=J_A$.

\begin{prop}\label{filtration}
For any algebraic subgroup $H \le G$, there is a $U_A$-central series $U_A=U_1 \ge U_2 \ge \cdots \ge U_n = {1}$ of connected closed subgroups, with each $U_i$ normal in $\pi^{-1}(H)$, such that that $U_i/U_{i+1}$ is a vector group which is $H$-equivariantly isomorphic to a simple rational $H$-module $V_i$.   
\end{prop}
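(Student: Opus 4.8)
The plan is to bypass the general (and more delicate) constructions of McNinch and Stewart by exploiting the associative algebra structure of $A = \text{End}_{G_{(1)}}(Q_1(\lambda))$ directly, using the $G^{\diamond}$-equivariant identification of $U_A = 1 + J_A$ with $J_A$ to transport the problem into the radical filtration of $A$. Under this identification the conjugation action of $G^{\diamond}$ on $U_A$ becomes the action $X \mapsto \sigma^{\diamond}(g)\,X\,\sigma^{\diamond}(g)^{-1}$ on $J_A$ inside $\text{End}_k(Q_1(\lambda))$. Since $\sigma^{\diamond}(G^{\diamond}) = G^*$ normalizes $U_A$, it normalizes $J_A$ as a subspace and hence fixes $A = k \oplus J_A$ as a subalgebra; being conjugation by invertible operators, the action is by algebra automorphisms of $A$. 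Algebra automorphisms preserve the Jacobson radical and all of its powers, so the radical filtration $J_A \supseteq J_A^2 \supseteq \cdots \supseteq J_A^m = 0$ is stable under all of $G^{\diamond}$.

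Setting $U_i = 1 + J_A^i$ produces the coarse skeleton of the desired series. Each $U_i$ is a vector group (a linear subspace under $U_A \cong J_A$, closed under the group law because $J_A^i \cdot J_A^i \subseteq J_A^{i+1} \subseteq J_A^i$), hence connected and closed, and is normal in $G^{\diamond}$, in particular in every $\pi^{-1}(H)$. The inclusions $J_A \cdot J_A^i \subseteq J_A^{i+1}$ and $J_A^i \cdot J_A \subseteq J_A^{i+1}$ give $[U_A, U_i] \subseteq U_{i+1}$, so $\{U_i\}$ is a central series of $U_A$. Because each quotient $J_A^i/J_A^{i+1}$ lies in the center of $U_A/U_{i+1}$, the inner $U_A$-action on it is trivial, so the conjugation action of $\pi^{-1}(H)$ on this quotient factors through $\pi^{-1}(H)/U_A \cong H$ and makes $J_A^i/J_A^{i+1}$ a genuine rational $H$-module.

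It remains to refine each layer into simple pieces, and here lies the one point that must be handled carefully. For each $i$ I would choose a composition series $J_A^{i+1} = W_{i,0} \subseteq W_{i,1} \subseteq \cdots \subseteq W_{i,k_i} = J_A^i$ of $H$-submodules with simple quotients, and insert the groups $1 + W_{i,j}$. The subtlety is that the ambient $\pi^{-1}(H)$-action on $J_A$ does \emph{not} factor through $H$ — the subgroup $U_A$ acts nontrivially by inner automorphisms — so $H$-stability of $W_{i,j}$ does not by itself give $\pi^{-1}(H)$-stability. This is resolved precisely by refining one central layer at a time: $J_A^{i+1}$ is already $G^{\diamond}$-stable, and since $\pi^{-1}(H)$ acts on $J_A^i/J_A^{i+1}$ through $H$, the $H$-submodule $W_{i,j}/J_A^{i+1}$ is $\pi^{-1}(H)$-stable, whence $W_{i,j}$ is $\pi^{-1}(H)$-stable and $1 + W_{i,j}$ is normal in $\pi^{-1}(H)$. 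The refined chain remains central (all relevant commutators still land in $1 + J_A^{i+1}$), each inserted group is again a vector group, and the successive quotients are simple $H$-modules. Relabeling the combined chain as $U_A = U_1 \ge U_2 \ge \cdots \ge U_n = 1$ yields the statement with $U_i/U_{i+1} \cong V_i$ simple.

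The genuine obstacle is not the radical filtration itself, which is forced by the algebra-automorphism property and is exactly what lets us upgrade to \emph{simple} quotients, but rather the interaction between the two refinement requirements: simplicity of the quotients and normality in $\pi^{-1}(H)$ rather than mere $H$-invariance. The reconciliation hinges on the observation that the unipotent inner action is trivial on each central layer, so that on these layers the $\pi^{-1}(H)$-action collapses to the $H$-action and ordinary $H$-module refinement suffices.
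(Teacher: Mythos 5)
Your proposal is correct and follows essentially the same route as the paper's own proof: the $G^{\diamond}$-equivariant identification $U_A \cong J_A$, stability of the radical powers under the algebra-automorphism action, the central series $U_i = 1 + (J_A)^i$, and refinement of each layer by a composition series of $H$-modules. In fact you spell out more carefully than the paper does why the refined terms remain normal in $\pi^{-1}(H)$ (namely, that the $\pi^{-1}(H)$-action on each central layer factors through $H$), a point the paper dismisses with ``it is clear that we can further refine.''
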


\begin{proof}
The argument here is essentially the same as that used in the first half of the proof of \cite[3.3.5]{St}.  The map
$$J_A \rightarrow U_A, \quad X \mapsto 1+X$$
is a $G^{\diamond}$-equivariant isomorphism of varieties.  Since $G^{\diamond}$ acts on $A = \text{End}_{G_{(1)}}(Q_1(\lambda))$ by algebra automorphisms and $J_A$ is the radical of $A$, the powers of $J_A$ are stable under the action of $G^{\diamond}$.  Let $U_i = 1+(J_A)^i$.  This is a closed connected subgroup of $U_A$ which is normal in $G^{\diamond}$.  Moreover, we have that $U_i \le Z(U_A/U_{i+1})$, since if $X \in (J_A)^i$ and $Y \in J_A$, then
$$(1 + X)(1+Y) = (1+Y)(1+X)$$
modulo $U_{i+1}$.  We immediately see then that $U_A$ acts trivially on each quotient $U_i/U_{i+1}$ so that the isomorphism above induces $G$-equivariant variety isomorphisms
$$(J_A)^i/(J_A)^{i+1} \rightarrow U_i/U_{i+1}.$$
As the $G$-module on the left can be given a filtration by simple modules, it is clear that we can further refine the filtration $\{U_i\}$ so that $U_i/U_{i+1}$ is a $G$-equivariantly isomorphic to a simple $G$-module.

Using these arguments for the subgroup $\pi^{-1}(H) \le G^{\diamond}$ acting on $U_A$ then proves the full statement.
\end{proof}

The following propositions were established by McNinch using inductive arguments for arbitrary connected reductive groups, and appropriate group extensions.  We recall them here only in our particular setting.

First, let $H \le G$ and $U_i, V_i$ be as in the previous proposition.

\begin{prop}\label{splitting}\cite[5.1,5.2]{M}
If $H^2(H,V_i)=0$ for all $i$, then there is a morphism of algebraic groups $j_H:H \rightarrow G^{\diamond}$ such that $\pi \circ j_H$ is the identity morphism on $H$.  If $H^1(H,V_i)=0$ for all $i$, then $j_H(H)$ is unique up to conjugation in $G^{\diamond}$.
\end{prop}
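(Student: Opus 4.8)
The plan is to reduce the statement to the vector-group case recalled in Section 2.4 and then to climb the central series supplied by Proposition \ref{filtration} one layer at a time. Restricting the extension $1 \to U_A \to G^{\diamond} \xrightarrow{\pi} G \to 1$ to $H$ yields a strictly exact sequence
$$1 \to U_A \to \pi^{-1}(H) \xrightarrow{\pi} H \to 1,$$
and a homomorphic section $j_H : H \to \pi^{-1}(H) \subseteq G^{\diamond}$ with $\pi \circ j_H = \mathrm{id}_H$ is precisely a splitting of this sequence. By Proposition \ref{filtration} we have a central series $U_A = U_1 \ge U_2 \ge \cdots \ge U_n = \{1\}$ with each $U_i$ normal in $\pi^{-1}(H)$ and $U_i/U_{i+1} \cong V_i$ a simple rational $H$-module.

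For the existence part I would build the section layer by layer, proving by induction on $i$ that the quotient extension
$$1 \to U_A/U_i \to \pi^{-1}(H)/U_i \xrightarrow{\pi} H \to 1$$
is split. For $i=1$ this is trivial, as $\pi^{-1}(H)/U_A \cong H$. Given a homomorphic section $s_i$ of the $i$-th extension, I would lift it across the quotient $U_i/U_{i+1} \cong V_i$: pulling the surjection $\pi^{-1}(H)/U_{i+1} \to \pi^{-1}(H)/U_i$ back along $s_i : H \to \pi^{-1}(H)/U_i$ produces an extension
$$1 \to V_i \to P \to H \to 1.$$
Since $V_i$ is abelian, the conjugation action of $P$ on it factors through $H$, and by Proposition \ref{filtration}, together with the centrality $U_i \le Z(U_A/U_{i+1})$ that forces the $U_A$-part to act trivially, this action is exactly the rational $H$-module structure $V_i$. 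Hence the extension is classified by a class in $H^2(H,V_i)=0$ and therefore splits, giving a section $s_{i+1}$ lifting $s_i$. Taking $i=n$ yields the desired $j_H = s_n$.

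For the uniqueness part I would compare two homomorphic sections $j,j'$ by the same layer-by-layer induction, arranging that after conjugation they agree modulo $U_i$ for successively larger $i$. They automatically agree modulo $U_1 = U_A$. If they agree modulo $U_i$, then their reductions modulo $U_{i+1}$ are two complements to $V_i$ in the pullback group $P$; these differ by a class in $Z^1(H,V_i)$ that is a coboundary precisely when $H^1(H,V_i)=0$, in which case, by the complement-conjugacy statement recalled in Section 2.4 (\cite[4.5.2]{M}), some $v \in V_i$ conjugates one reduction to the other. Lifting $v$ to $U_i \le U_A$ and conjugating $j'$ by it leaves the agreement modulo $U_i$ undisturbed, since conjugation by an element of $U_i$ is trivial modulo $U_i$, while achieving agreement modulo $U_{i+1}$. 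The product of these finitely many conjugators is a single element of $U_A \le G^{\diamond}$ conjugating $j'$ to $j$.

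The main obstacle is not a single deep step but the verification, at every layer, that the hypotheses of the vector-group theory of Section 2.4 genuinely apply: one must know that each $U_i/U_{i+1}$ is a vector group carrying a $k$-linear $H$-action equal to the rational module $V_i$, so that the relevant extension class lands in the ordinary rational cohomology groups $H^2(H,V_i)$ and $H^1(H,V_i)$ rather than in some nonabelian or twisted variant. This is exactly what Proposition \ref{filtration} and the centrality of the series provide, so once that input is in hand the vanishing hypotheses feed directly into the splitting and conjugacy criteria. A minor secondary point is confirming that the successive conjugations in the uniqueness argument assemble into one conjugation in $G^{\diamond}$, which is immediate because the filtration has finite length.
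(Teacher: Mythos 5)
Your proof is correct and takes essentially the approach the paper relies on: the paper does not prove this proposition itself but cites McNinch \cite[5.1, 5.2]{M}, whose argument is precisely the layer-by-layer induction you give — climbing the central series of Proposition \ref{filtration}, using the $H^2(H,V_i)$-classification of strictly exact extensions by vector groups from Section 2.4 for existence, and the $H^1(H,V_i)$/complement-conjugacy statement for uniqueness. Your verifications that each pullback extension is strictly exact with the conjugation action on $U_i/U_{i+1}$ equal to the rational $H$-module $V_i$ (via centrality of the series), and that the successive conjugators in $U_i$ assemble into a single element of $U_A$, are exactly the inputs the cited result requires.
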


Now choose $U_i$ relative to $G$.  Using the isomorphism of cohomology groups $H^2(G,V_i) \cong H^2(B,V_i)$, which is induced by the restriction of the Hochschild complex \cite[Proposition 4.2]{M}, we have:

\begin{prop}\label{borel}\cite[5.4]{M}
There is an algebraic group homomorphism $j_G:G \rightarrow G^{\diamond}$ splitting $\pi$ if and only if there is an algebraic group homomorphism $j_B:B \rightarrow \pi^{-1}(B)$ splitting the resulting extension of $B$ by $U_A$.
\end{prop}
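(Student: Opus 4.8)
The forward implication is immediate: if $j_G\colon G\to G^{\diamond}$ splits $\pi$, then for every $b\in B$ we have $\pi(j_G(b))=b\in B$, so $j_G(B)\subseteq\pi^{-1}(B)$ and $j_B:=j_G|_B$ splits the induced extension of $B$ by $U_A$. The content lies in the reverse direction, and my plan is to build $j_G$ by climbing the central filtration of Proposition \ref{filtration} taken relative to $G$. Writing $U_A=U_1\ge U_2\ge\cdots\ge U_n=1$ with each $U_i$ normal in $G^{\diamond}$ and $U_i/U_{i+1}\cong V_i$ simple, I would construct homomorphic sections $s_i\colon G\to G^{\diamond}/U_i$ inductively, starting from $s_1=\mathrm{id}$ on $G^{\diamond}/U_1\cong G$ and ending with the desired $s_n=j_G\colon G\to G^{\diamond}/U_n=G^{\diamond}$.

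At the $i$-th stage I must lift $s_i$ through the extension with abelian kernel $V_i$,
\[1\to V_i\to G^{\diamond}/U_{i+1}\to G^{\diamond}/U_i\to 1,\]
whose conjugation action on $V_i$ factors through $G$ by Proposition \ref{filtration}. Pulling this extension back along the homomorphism $s_i$ produces an extension of $G$ by $V_i$ whose class $o_i(s_i)\in H^2(G,V_i)$ is the obstruction to a homomorphic lift: such a lift $s_{i+1}$ exists precisely when $o_i(s_i)=0$. The engine of the argument is naturality under restriction to $B$: the map $H^2(G,V_i)\to H^2(B,V_i)$ sends $o_i(s_i)$ to the corresponding obstruction $o_i(s_i|_B)$ for lifting $s_i|_B$ over $\pi^{-1}(B)$. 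Provided $s_i|_B$ agrees with the reduction of $j_B$ modulo $U_i$, the splitting $j_B$ supplies a lift over $B$, so $o_i(s_i|_B)=0$; since $H^2(G,V_i)\to H^2(B,V_i)$ is an isomorphism \cite[Proposition 4.2]{M}, its injectivity forces $o_i(s_i)=0$, and hence $s_{i+1}$ exists over $G$.

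The delicate point, which I expect to be the main obstacle, is maintaining the inductive hypothesis $s_i|_B=\overline{\jmath}_{\,i}$, where $\overline{\jmath}_{\,i}$ denotes $j_B$ reduced modulo $U_i$; without it the vanishing of $o_i(s_i|_B)$ is not guaranteed. Once some lift $s_{i+1}$ is produced, its restriction $s_{i+1}|_B$ and the target $\overline{\jmath}_{\,i+1}$ are two splittings lifting the same $s_i|_B$, and they differ by a cocycle $c\in Z^1(B,V_i)$. I would correct $s_{i+1}$ by adding a cocycle from $Z^1(G,V_i)$: the coboundary part causes no trouble, since $V_i^{G}\subseteq V_i^{B}$ makes $B^1(G,V_i)\to B^1(B,V_i)$ surjective, so the real task is to realize the class $[c]$ in the image of $H^1(G,V_i)\to H^1(B,V_i)$, after which a conjugation by $V_i$ aligns the two sections exactly. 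This compatibility bookkeeping is where the comparison of cohomology along $B\hookrightarrow G$ must be invoked, whereas the per-stage transfer of $H^2$-vanishing that actually produces the lift is formal. Carrying the induction through to $i=n$ then yields $j_G=s_n$, establishing the reverse implication.
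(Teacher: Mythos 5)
Your proposal is correct and takes essentially the same route as the paper, which for this proposition simply cites McNinch \cite[5.4]{M}: the argument behind that citation is exactly your induction up the filtration of Proposition \ref{filtration}, using naturality of the obstruction class and the restriction isomorphism $H^2(G,V_i)\cong H^2(B,V_i)$ of \cite[Proposition 4.2]{M} to kill each obstruction. The compatibility step you flag is not an obstacle, since the same comparison theorem (restriction $H^n(G,V)\to H^n(B,V)$ is an isomorphism in all degrees, by Kempf vanishing) gives surjectivity of $H^1(G,V_i)\to H^1(B,V_i)$, so the class $[c]$ automatically lies in the image and your correction-plus-conjugation adjustment goes through.
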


\subsection{Second cohomology of $B$}

We require the following computations for $B$-cohomology given by Bendel, Nakano, and Pillen \cite{BNP}, and also obtained by Andersen and Rian \cite{AR} when $p>h$.  As we will often apply these results to Borel subgroups of Levi subgroups of $G$, we note that these formulas hold for arbitrary reductive groups for which the assumptions on the prime hold.

\begin{thm}\label{2ndcoh}\cite[Theorem 5.8]{BNP} Let $B$ be the Borel subgroup of a connected reductive group, and assume that $p > 3$.  Then
$$H^2(B,\lambda) = \begin{cases} k & \text{ if } \lambda = -p^i w \cdot 0 \text{ for } i \ge 0, \ell(w)=2\\
                                                          k & \text{ if } \lambda = p^i \alpha, \quad \alpha \in \Pi\\
                                                          k & \text{ if } \lambda = p^i\alpha - p^j\beta, \quad \alpha,\beta \in \Pi\\
                                                          0 & \textup{otherwise}\\
                                     \end{cases}$$
\end{thm}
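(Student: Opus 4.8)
The plan is to reduce the computation to the cohomology of the unipotent radical and then read off weight spaces. Write $B = T \ltimes U$, where $U$ is the unipotent radical and $\mathfrak{u} = \mathrm{Lie}(U)$. Since $\lambda$ factors through $T$, the group $U$ acts trivially on the one-dimensional module $k_\lambda$, so $H^j(U,k_\lambda) \cong H^j(U,k)\otimes k_\lambda$ as $T$-modules. In the Lyndon--Hochschild--Serre spectral sequence for $1 \to U \to B \to T \to 1$ one has $E_2^{i,j} = H^i(T, H^j(U,k)\otimes k_\lambda)$, and because $T$ is a torus (hence linearly reductive, with $H^i(T,-)=0$ for $i>0$ on rational modules) every row with $i>0$ vanishes. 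The spectral sequence therefore collapses and
$$H^2(B,\lambda) \cong \left( H^2(U,k)\otimes k_\lambda \right)^T \cong H^2(U,k)_{-\lambda},$$
the $(-\lambda)$-weight space of $H^2(U,k)$. The theorem becomes the assertion that, as a $T$-module, $H^2(U,k)$ is multiplicity-free with weights exactly the negatives of the three listed families.

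First I would compute $H^2(U,k)$ as a $T$-module by combining two filtrations of $U$. The abelianization $U^{\mathrm{ab}} \cong \prod_{\alpha\in\Pi} U_\alpha$ identifies $H^1(U,k)=\mathrm{Hom}(U,\mathbb{G}_a)$ with $\bigoplus_{\alpha\in\Pi} H^1(\mathbb{G}_a,k)$, and on a single root subgroup the additive characters $x\mapsto x^{p^i}$ contribute one generator in each Frobenius-twisted weight; these degree-one classes already record $H^1(B,\lambda)$ and, via cup products, feed into degree two. To reach degree two in a controlled way I would use the Frobenius filtration $1 \to U_{(1)} \to U \to U^{(1)} \to 1$ together with the identification $U^{(1)}\cong U$, so that the associated spectral sequence expresses $H^\bullet(U,k)$ recursively in terms of the restricted cohomology $H^\bullet(U_{(1)},k)$, and hence of the cohomology $H^\bullet(\mathfrak{u})$ of the restricted enveloping algebra $u(\mathfrak{u})$. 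The hypothesis $p>3$ is precisely what forces $H^\bullet(\mathfrak{u})$ to match its characteristic-zero form, so that the analogue of Kostant's theorem applies and $H^2(\mathfrak{u})$ is multiplicity-free with weights $w\cdot 0$ for $\ell(w)=2$.

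With these inputs the three families of the theorem should correspond to the three natural sources of degree-two classes: the genuine Lie-theoretic (Kostant) classes, Frobenius-twisted $i$ times, accounting for the weights $-p^i w\cdot 0$ with $\ell(w)=2$; the degree-two Bockstein (Frobenius) class attached to the additive generator on a single root subgroup $U_\alpha$, accounting for the weights $p^i\alpha$; and the cup products of two degree-one generators from distinct root subgroups or distinct Frobenius twists, accounting for the weights $p^i\alpha - p^j\beta$. The assumption $p>3$ enters again here to guarantee $x\cup x=0$ for a degree-one class (using $\mathrm{char}\,k\neq 2$) and to exclude the small-prime relations among simple roots that would otherwise manufacture extra degree-two weights; matching the precise weights and signs to the three cases is part of the bookkeeping.

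The main obstacle is exactly this spectral-sequence bookkeeping: one must compute every relevant differential and resolve the extension problems in order to prove that no listed class is killed, that no unlisted weight survives, and that each surviving weight occurs with multiplicity precisely one. The difficulty is that the interaction between the Frobenius-twist filtration and the non-abelian bracket on $\mathfrak{u}$ can a priori either create or cancel classes, and ruling this out seems to require an explicit treatment of the rank-one and rank-two root subsystems -- the latter being where the $\ell(w)=2$ classes are born -- followed by an inductive reduction of the general case to these. This is the calculation carried out by Bendel, Nakano, and Pillen in \cite{BNP}, and by Andersen and Rian in \cite{AR} under the stronger hypothesis $p>h$.
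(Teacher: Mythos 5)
The first thing to say is that the paper contains no proof of this statement: it is recalled verbatim (up to a sign convention for the choice of Borel) from \cite[Theorem 5.8]{BNP}, with \cite{AR} noted as an independent source when $p>h$. Your proposal ultimately does the same thing --- the decisive multiplicity-one computation is explicitly deferred to \cite{BNP} --- so at bottom both ``proofs'' are citations, and what can actually be judged is the reduction and sketch you add on top. The reduction is correct: $T$ is diagonalizable, so the Lyndon--Hochschild--Serre spectral sequence for $U$ normal in $B$ collapses and $H^2(B,\lambda)\cong\bigl(H^2(U,k)\otimes k_\lambda\bigr)^T$, the $(-\lambda)$-weight space of $H^2(U,k)$; this is indeed the right frame for the computation.

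The sketch itself has two genuine flaws. First, you invoke ``the analogue of Kostant's theorem'' for the cohomology of the restricted enveloping algebra $u(\mathfrak{u})$. Kostant's theorem (valid in degrees $\le 2$ for $p>3$) computes the \emph{ordinary} Lie algebra cohomology $H^\bullet(\mathfrak{u},k)$; the \emph{restricted} cohomology $H^\bullet(u(\mathfrak{u}),k)=H^\bullet(U_{(1)},k)$, which is what your Frobenius-kernel filtration actually feeds you, is strictly larger in degree two. Already for $\mathfrak{u}$ of rank one, ordinary $H^2(\mathfrak{u},k)=0$ while $H^2(u(\mathfrak{u}),k)=k$ (a Bockstein class of weight $-p\alpha$); in general the restricted $H^2$ contains Frobenius-twisted classes attached to \emph{all} positive roots, not just simple ones, and ruling out the non-simple contributions in $H^2(U,k)$ is precisely part of the bookkeeping you set aside. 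Second, and more telling: your cup-product source produces classes of weight $-(p^i\alpha+p^j\beta)$, hence accounts for $\lambda=p^i\alpha+p^j\beta$, \emph{not} $\lambda=p^i\alpha-p^j\beta$; you assert a match with the printed statement that is not there. In fact your own framework refutes the printed third case: every weight of $k[U]$, hence of $H^\bullet(U,k)$, lies in $-\mathbb{Z}_{\ge 0}\Phi^+$, so $H^2(B,\lambda)\cong H^2(U,k)_{-\lambda}$ vanishes unless $\lambda$ is a nonnegative integral combination of simple roots, which $p^i\alpha-p^j\beta$ (for $\alpha\ne\beta$) never is. The statement as transcribed in the paper thus carries a sign typo (and the second case needs $i>0$, as your Bockstein source itself shows): the intended family is $p^i\alpha+p^j\beta$, which is also what the paper actually uses later, in the proof of the lifting theorem for $G_{(1)}L_\alpha$, where the possibly non-split rank-one weights are $p^i\alpha$ with $i>0$ and $p^i(1+p^j)\alpha$. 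Carrying out your weight bookkeeping rather than gesturing at it would both close the gap in your argument and catch this discrepancy.
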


\begin{remark}
We note that our signs differ from those in the cited papers due to our choice of $B$ corresponding to the positive roots rather than the negative ones.
\end{remark}

\bigskip


\section{Further Results}

We continue to work with the algebraic group extension
$$1 \rightarrow U_A \rightarrow G^{\diamond} \xrightarrow{\pi} G \rightarrow 1$$
from Section 2.3 associated to $Q_1(\lambda)$ for some fixed $\lambda \in X_1(T)$.

We first look at when an algebraic group morphism from a closed subgroup $H \le G$ to $G^{\diamond}$ leads to a lifting of $Q_1(\lambda)$ to $G_{(1)}H$.  In formulating this, we find it convenient to make use of the equivalence between first Frobenius kernels and restricted Lie algebras, noting that the section $j_{G_{(1)}}:G_{(1)} \rightarrow G^{\diamond}$ can also be viewed as a splitting on the level of Lie algebras, so that $$\mathfrak{g}^{\diamond} \cong dj_{G_{(1)}}(\mathfrak{g}) \oplus \textup{Lie}(U_A).$$
We now have the following:

\begin{prop}\label{extend}
Let $H$ be an algebraic subgroup of $G$.  If there is a morphism of algebraic groups $j_H: H \rightarrow G^{\diamond}$ such that $\pi \circ j_H$ is the identity morphism on $H$, and $dj_H(\mathfrak{h}) \subseteq dj_{G_{(1)}}(\mathfrak{g})$, then $Q_1(\lambda)$ lifts to $G_{(1)}H$. 
\end{prop}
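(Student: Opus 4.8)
The plan is to assemble the two sections $j_{G_{(1)}}$ and $j_H$ into a single homomorphism of group schemes $\bar\Psi \colon G_{(1)}H \to G^{\diamond}$ which restricts to $j_{G_{(1)}}$ on $G_{(1)}$ and to $j_H$ on $H$, and then to compose with $\sigma^{\diamond}$. The resulting morphism $\sigma^{\diamond}\circ\bar\Psi \colon G_{(1)}H \to GL(Q_1(\lambda))$ is a $G_{(1)}H$-module structure on $Q_1(\lambda)$, and since $\bar\Psi|_{G_{(1)}} = j_{G_{(1)}}$, its restriction to $G_{(1)}$ equals $\sigma^{\diamond}\circ j_{G_{(1)}}$, which is the original $G_{(1)}$-action. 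Thus producing $\bar\Psi$ is the entire content of the lifting.

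To construct $\bar\Psi$, I would first present $G_{(1)}H$ as a quotient of a semidirect product. Because $G_{(1)}$ is normal in $G$, the subgroup $H$ acts on $G_{(1)}$ by conjugation, and the multiplication map $G_{(1)}\rtimes H \to G$, $(g,h)\mapsto gh$, has image $G_{(1)}H$; its kernel is the anti-diagonal copy of $G_{(1)}\cap H$. Functoriality of the Frobenius morphism gives $G_{(1)}\cap H = H_{(1)}$, so $G_{(1)}H \cong (G_{(1)}\rtimes H)/H_{(1)}$. I would then define $\Psi \colon G_{(1)}\rtimes H \to G^{\diamond}$ by $\Psi(g,h) = j_{G_{(1)}}(g)\,j_H(h)$. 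That $\Psi$ is a homomorphism reduces to the identity $j_{G_{(1)}}(hgh^{-1}) = j_H(h)\,j_{G_{(1)}}(g)\,j_H(h)^{-1}$, and this holds automatically: $j_{G_{(1)}}(G_{(1)})$ is normal in $G^{\diamond}$, so both sides lie in $j_{G_{(1)}}(G_{(1)})$, and both map to $hgh^{-1}$ under $\pi$; since $\pi$ restricts to an isomorphism $j_{G_{(1)}}(G_{(1)}) \xrightarrow{\sim} G_{(1)}$, they coincide. Notably, the hypothesis plays no role in this step.

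The hypothesis $dj_H(\mathfrak{h}) \subseteq dj_{G_{(1)}}(\mathfrak{g})$ enters exactly at the descent step, where I need $\Psi$ to kill the anti-diagonal $H_{(1)}$, i.e. that $j_{G_{(1)}}$ and $j_H$ agree on $H_{(1)}$. Applying $d\pi$ to the inclusion and using $d\pi\circ dj_{G_{(1)}} = \mathrm{id}_{\mathfrak{g}}$ and $d\pi\circ dj_H = \mathrm{id}_{\mathfrak{h}}$ forces $dj_H = dj_{G_{(1)}}|_{\mathfrak{h}}$; since $H_{(1)}$ has height one, a homomorphism out of it is determined by its differential (equivalently by the associated restricted Lie algebra map), so $j_H|_{H_{(1)}} = j_{G_{(1)}}|_{H_{(1)}}$ and $\Psi$ descends to the desired $\bar\Psi$. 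I expect the main obstacle to be the scheme-theoretic bookkeeping around the infinitesimal overlap $H_{(1)}$ — verifying $G_{(1)}\cap H = H_{(1)}$, that the two sections genuinely agree there as morphisms and not merely on differentials, and that the quotient presentation of $G_{(1)}H$ is valid functorially — rather than any single hard estimate.
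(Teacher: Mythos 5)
Your proposal is correct and follows essentially the same route as the paper: present $G_{(1)}H$ as the quotient of $G_{(1)}\rtimes H$ by the (anti-diagonal) $G_{(1)}\cap H = H_{(1)}$, use normality of $j_{G_{(1)}}(G_{(1)})$ in $G^{\diamond}$ to see that $j_{G_{(1)}}\times j_H$ is a homomorphism, use the hypothesis on differentials plus the height-one equivalence to conclude $j_H = j_{G_{(1)}}$ on $H_{(1)}$ so the map descends, and compose with $\sigma^{\diamond}$. Your justification of the conjugation compatibility (via injectivity of $\pi$ on $j_{G_{(1)}}(G_{(1)})$ rather than the triviality of the $U_A$-action) is a cosmetic variant of the paper's argument, not a genuinely different approach.
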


\begin{proof}
The subgroup scheme $G_{(1)}H \le G$ is isomorphic to the quotient of the external semi-direct product $G_{(1)} \rtimes H$ by $G_{(1)} \cap H$ \cite[I.6.2]{J1}.  The conjugation action of $G^{\diamond}$ on $j_{G_{(1)}}(G_{(1)})$ factors via $\pi$ since $U_A$ acts trivally, therefore $j_H(H)$ acts on $j_{G_{(1)}}(G_{(1)})$ as $H$ acts on $G_{(1)}$.  We then get a morphism
$$G_{(1)} \rtimes H \xrightarrow{j_{G_{(1)}} \times j_H} G^{\diamond}.$$
Since $d\pi \circ dj_H$ is the identity on $\mathfrak{h}$ and $d\pi \circ dj_{G_{(1)}}$ is the identity on $\mathfrak{g}$, if $dj_H(\mathfrak{h}) \subseteq dj_{G_{(1)}}(\mathfrak{g})$ we must have that $dj_H=dj_{G_{(1)}}$ on $\mathfrak{h}$.  From this it follows that $j_H=j_{G_{(1)}}$ on $H_{(1)} = G_{(1)} \cap H$.  So the kernel of the morphism $j_{G_{(1)}} \times j_H$ is $G_{(1)} \cap H$, hence it gives a morphism from $G_{(1)}H$ to $G^{\diamond}$ extending $j_{G_{(1)}}$.  Composing with $\sigma^{\diamond}$ then gives the representation of $G_{(1)}H$ on $Q_1(\lambda)$.
\end{proof}

Recall that the morphism $\sigma: G \rightarrow G^*$ in Section 2.3 can be chosen so that $\sigma \mid_T$ is a group homomorphism.  In order to prove that several important subgroups of $G$ satisfy the assumptions above, and for other uses, we find it necessary to analyze the $T$-weights of $\text{Lie}(U_A) = J_A$.

\begin{prop}\label{weights}
Let $\mu$ be a weight of the $T$-module $\textup{End}_{G_{(1)}}(\widehat{Q}_1(\lambda))$.  Then:
\begin{enumerate}
\item $\mu \in pX(T) \cap \mathbb{Z}\Phi$.
\item $-w(\mu) \le 2(p-1)\rho$ for all $w \in W$.
\end{enumerate}

\end{prop}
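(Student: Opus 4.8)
The plan is to read the $T$-weights of $\mathrm{End}_{G_{(1)}}(\widehat{Q}_1(\lambda))$ off the $G_{(1)}T$-composition factors of $\widehat{Q}_1(\lambda)$, and then to deduce both bounds from the known weights of $\widehat{Q}_1(\lambda)$. Throughout I identify a $T$-weight vector of weight $\mu$ with a $G_{(1)}$-endomorphism $f$ that raises every $T$-weight by $\mu$, i.e.\ $f\big(\widehat{Q}_1(\lambda)_\nu\big)\subseteq \widehat{Q}_1(\lambda)_{\nu+\mu}$.

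For (1), first note that such an $f$ commutes with $T_{(1)}\le G_{(1)}$ and hence preserves $T_{(1)}$-weight spaces; since the $T_{(1)}$-weight attached to a vector of $T$-weight $\nu$ is the image of $\nu$ in $X(T)/pX(T)\cong X(T_{(1)})$, a nonzero $f$ of weight $\mu$ forces $\mu\in pX(T)$. For the root-lattice membership, I would use that every composition factor of $\widehat{Q}_1(\lambda)$ is linked to $\lambda$, so all $T$-weights of $\widehat{Q}_1(\lambda)$ lie in the single coset $\lambda+\mathbb{Z}\Phi$; as $\mu$ is a difference of two such weights, $\mu\in\mathbb{Z}\Phi$. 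Together these give (1).

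The crux of (2) is to convert the weight $\mu$ into a composition factor. Since $\mu\in pX(T)$, tensoring by the one-dimensional module $-\mu$ turns $f$ into a $T$-weight-preserving, hence $G_{(1)}T$-linear, map, yielding an isomorphism $\mathrm{End}_{G_{(1)}}(\widehat{Q}_1(\lambda))_\mu\cong\mathrm{Hom}_{G_{(1)}T}\big(\widehat{Q}_1(\lambda),\widehat{Q}_1(\lambda)\otimes(-\mu)\big)=\mathrm{Hom}_{G_{(1)}T}\big(\widehat{Q}_1(\lambda),\widehat{Q}_1(\lambda-\mu)\big)$. As $\widehat{Q}_1(\lambda)$ is the $G_{(1)}T$-projective cover of $\widehat{L}_1(\lambda)$, this Hom-space has dimension $[\widehat{Q}_1(\lambda-\mu):\widehat{L}_1(\lambda)]=[\widehat{Q}_1(\lambda):\widehat{L}_1(\lambda+\mu)]$. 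Thus $\mu$ being a weight of the endomorphism algebra forces $\widehat{L}_1(\lambda+\mu)$ to be a $G_{(1)}T$-composition factor of $\widehat{Q}_1(\lambda)$; in particular its highest weight $\lambda+\mu$ is a $T$-weight of $\widehat{Q}_1(\lambda)$.

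Finally I would upgrade the (non-$W$-invariant) weight bound on $\widehat{Q}_1(\lambda)$ to a $W$-invariant one by passing to the tilting module. Recall from Section 2.3 that $\widehat{Q}_1(\lambda)$ is a $G_{(1)}T$-direct summand of $T(\Lambda)$ with $\Lambda=2(p-1)\rho+w_0\lambda$ dominant, so $\lambda+\mu$ is a weight of the genuine $G$-module $T(\Lambda)$, whose weight set is stable under $W$. Hence $w(\lambda+\mu)\le\Lambda$ for every $w\in W$, and therefore $w\mu\le\Lambda-w\lambda=2(p-1)\rho+(w_0\lambda-w\lambda)\le 2(p-1)\rho$, the last step because $w_0\lambda$ is the minimal element of the orbit $W\lambda$ in the dominance order. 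Since $2(p-1)\rho$ is fixed by $-w_0$ (which preserves both the dominance order and the dominant cone, as it permutes the simple roots), the assertion $w\mu\le 2(p-1)\rho$ for all $w$ is equivalent to $-w\mu\le 2(p-1)\rho$ for all $w$, which is (2). The main obstacle is exactly this last upgrade: the bounds available for $\widehat{Q}_1(\lambda)$ are phrased in the dominance order, which is not $W$-stable, so one cannot simply apply $w$ to them; recognizing $\lambda+\mu$ as a weight of the tilting module, whose weight polytope \emph{is} $W$-invariant, is what makes the estimate uniform in $w$.
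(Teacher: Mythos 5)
Your proof is correct, but it reaches part (2) by a genuinely different route than the paper. Part (1) is essentially the paper's argument: the $pX(T)$ statement via triviality of the $T_{(1)}$-action is identical, and where you quote linkage for $\mu\in\mathbb{Z}\Phi$, the paper instead deduces it from the lowest-weight bound $\lambda+\mu\ge\lambda-2(p-1)\rho$ (both are fine). For (2), both proofs start from the same observation --- a weight $\mu$ of the endomorphism algebra produces the composition factor $\widehat{L}_1(\lambda+\mu)$ of $\widehat{Q}_1(\lambda)$; your space $\mathrm{Hom}_{G_{(1)}T}\bigl(\widehat{Q}_1(\lambda),\widehat{Q}_1(\lambda-\mu)\bigr)$ is the paper's $\mathrm{Hom}_{G_{(1)}T}\bigl(\widehat{Q}_1(\lambda+p\mu'),\widehat{Q}_1(\lambda)\bigr)$ after tensoring by $\mu=p\mu'$ --- but they diverge at the $W$-symmetry step. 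The paper stays entirely inside the $G_{(1)}T$-category: it twists both arguments of the Hom-space by a representative $g\in N_G(T)$ of $w$ and invokes \cite[Lemma II.11.7(b)]{J1} to conclude that $w(\mu)$ is again a weight of $\mathrm{End}_{G_{(1)}}(\widehat{Q}_1(\lambda))$, so the bound from part (1) applies to $w(\mu)$ directly. You instead import the symmetry from an ambient genuine $G$-module: $\widehat{Q}_1(\lambda)$ is a $G_{(1)}T$-summand of $T(2(p-1)\rho+w_0\lambda)$ (the Pillen--Donkin input recorded in Section 2.3), whose weight set is $W$-stable and bounded above by its highest weight, and then the minimality of $w_0\lambda$ in $W\lambda$ plus the $-w_0$ symmetry yield the stated inequality; all of these reductions check out. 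As for what each buys: your argument avoids Jantzen's twisting lemma and is quite transparent, at the cost of leaning on the nontrivial tilting-summand theorem (harmless here, since the paper assumes it throughout Section 2.3); the paper's argument is self-contained in $G_{(1)}T$-theory and establishes the strictly stronger fact that the weight set of the endomorphism algebra is itself $W$-stable --- which is how the paper later phrases things in the proof of Theorem \ref{SLnLevi} --- whereas your proof gives a $W$-uniform bound without ever showing that $w(\mu)$ is a weight.
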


\begin{proof}
Since $T_{(1)}$ acts trivially, we see that $\mu \in pX(T)$, so we may write $\mu = p\mu^{\prime}$ for some $\mu^{\prime} \in X(T)$.  From the proof of \cite[Lemma II.9.4]{J1} we have that
$$\text{End}_{G_{(1)}}(\widehat{Q}_1(\lambda))_{p\mu^{\prime}} = \text{Hom}_{G_{(1)}T}(\widehat{Q}_1(\lambda + p\mu^{\prime}),\widehat{Q}_1(\lambda)).$$
It follows that $p\mu^{\prime}$ is a weight only if $\widehat{L}_1(\lambda + p\mu^{\prime})$ is a $G_{(1)}T$-composition factor of $\widehat{Q}_1(\lambda)$.  Therefore $\lambda + p\mu^{\prime} \ge \lambda - 2(p-1)\rho$, the lowest weight of $\widehat{Q}_1(\lambda)$.  So $-\mu \le 2(p-1)\rho$, and $\mu \in \mathbb{Z}\Phi$.

Let $w \in W$, and $g \in N_G(T)$ with $g \mapsto w$.  For a $G_{(1)}T$-module $M$, let $^gM$ be the twist by $g$.  We have that
\begin{eqnarray*} \text{Hom}_{G_{(1)}T}(\widehat{Q}_1(\lambda + p\mu^{\prime}),\widehat{Q}_1(\lambda)) & \cong & \text{Hom}_{G_{(1)}T}(^g\widehat{Q}_1(\lambda + p\mu^{\prime}),^g\widehat{Q}_1(\lambda)) \\
& \cong & \text{Hom}_{G_{(1)}T}(\widehat{Q}_1(\lambda + w(p\mu^{\prime})),\widehat{Q}_1(\lambda)) \end{eqnarray*}
by \cite[Lemma II.11.7(b)]{J1} and the isomorphism $\widehat{Q}_1(\lambda + p\mu^{\prime}) \cong \widehat{Q}_1(\lambda) \otimes p\mu^{\prime}$.
Thus $w(\mu)$ is also a weight, and the inequality $-w(\mu) \le 2(p-1)\rho$ follows as above.
\end{proof}

\begin{prop}\label{rootgen}
Let $p>2$ and $H \le G$ be a subgroup generated by $T$ and a collection of root subgroups of $G$.  Then any homomorphism $j_H: H \rightarrow G^{\diamond}$ for which $\pi \circ j_H$ is the identity on $H$ must satisfy $dj_H(\mathfrak{h}) \subseteq dj_{G_{(1)}}(\mathfrak{g})$.
\end{prop}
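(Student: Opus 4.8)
The plan is to reduce the statement to showing that a single difference map vanishes. Since $d\pi \circ dj_H$ and $d\pi \circ dj_{G_{(1)}}$ are both the identity on $\mathfrak{h}$ (viewing $\mathfrak{h} \subseteq \mathfrak{g}$), the map $u(x) := dj_H(x) - dj_{G_{(1)}}(x)$ takes values in $\ker d\pi = \textup{Lie}(U_A) = J_A$ for every $x \in \mathfrak{h}$. Because $\mathfrak{g}^{\diamond} = dj_{G_{(1)}}(\mathfrak{g}) \oplus J_A$ is a direct sum, the containment $dj_H(\mathfrak{h}) \subseteq dj_{G_{(1)}}(\mathfrak{g})$ is equivalent to $u \equiv 0$. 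As $T \le H$, the Lie algebra $\mathfrak{h}$ is a $T$-submodule of $\mathfrak{g}$ containing $\mathfrak{t}$, so it decomposes as $\mathfrak{h} = \mathfrak{t} \oplus \bigoplus_{\alpha \in \Psi} \mathfrak{g}_{\alpha}$ for some set of roots $\Psi$, and it suffices to check that $u$ vanishes on $\mathfrak{t}$ and on each root space $\mathfrak{g}_{\alpha}$.

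For the root spaces the argument is a routine weight computation. Both $dj_H$ and $dj_{G_{(1)}}$ intertwine the conjugation action of $T$ — for $dj_H$ this is immediate since $j_H$ is a homomorphism and $t \in H$, and for $dj_{G_{(1)}}$ it follows from the fact that conjugation of $G^{\diamond}$ on the normal subgroup $j_{G_{(1)}}(G_{(1)})$ factors through $\pi$ — so $u$ is $T$-equivariant. Consequently $u(\mathfrak{g}_{\alpha}) \subseteq (J_A)_{\alpha}$, the $\alpha$-weight space of $J_A$. By Proposition \ref{weights}(1) every weight of $J_A$ lies in $pX(T)$; but if a root satisfied $\alpha = p\beta$ with $\beta \in X(T)$, then $\langle \beta, \alpha^{\vee}\rangle = \langle \alpha, \alpha^{\vee}\rangle / p = 2/p$ would fail to be an integer since $p > 2$, a contradiction. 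Hence $(J_A)_{\alpha} = 0$ and $u|_{\mathfrak{g}_{\alpha}} = 0$.

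The torus is the main obstacle, and here I would exploit the restricted structure. Fix a toral element $h \in \mathfrak{t}$, i.e. $h^{[p]} = h$ (such elements span $\mathfrak{t}$ over $k$), and write $s = dj_{G_{(1)}}(h)$ and $n = u(h) \in J_A$. Since $u$ is $T$-equivariant and $h$ is $T$-fixed, $n$ lies in the zero weight space of $J_A$. The key point is that the adjoint action of $\mathfrak{g}^{\diamond}$ on the ideal $J_A = \textup{Lie}(U_A)$ factors through $d\pi$ (as the conjugation action of $G^{\diamond}$ on $U_A$ factors through $\pi$); thus $\textup{ad}(s)$ acts on $J_A$ exactly as $h$ acts by weights, and on the weight-zero element $n$ this yields $[s,n] = 0$.

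Finally, both $j_H$ and $j_{G_{(1)}}$ are homomorphisms, so their differentials are maps of restricted Lie algebras. Therefore $s^{[p]} = dj_{G_{(1)}}(h^{[p]}) = s$ and $dj_H(h)^{[p]} = dj_H(h^{[p]}) = dj_H(h) = s + n$. Because $s$ and $n$ commute, the $p$-operation is additive on them, giving $(s+n)^{[p]} = s^{[p]} + n^{[p]} = s + n^{[p]}$. Comparing the two expressions for $dj_H(h)^{[p]}$ forces $n^{[p]} = n$. But $n$ lies in $\textup{Lie}(U_A)$, on which the $p$-operation is nilpotent, so $n^{[p]} = n$ implies $n = 0$. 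Hence $u$ vanishes on all toral elements, and therefore on $\mathfrak{t}$, which together with the root space case completes the proof.
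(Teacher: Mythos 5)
Your overall strategy coincides with the paper's: decompose $\mathfrak{h}$ into $\mathfrak{t}$ and root spaces, kill the root-space components of the difference $u = dj_H - dj_{G_{(1)}}$ by the weight argument (the weights of $\mathrm{Lie}(U_A)$ lie in $pX(T)$ by Proposition \ref{weights}, while no root is $p$-divisible when $p>2$), and kill the toral part by playing off toral elements against the $p$-nilpotence of $\mathrm{Lie}(U_A)$. The paper compresses this last step into the remark that the only restricted Lie algebra homomorphism $\mathfrak{t} \rightarrow \mathrm{Lie}(U_A)$ is trivial; your Jacobson-formula computation is exactly that remark written out, and your use of the $T$-module decomposition of $\mathfrak{h}$ (rather than generation by $\mathfrak{t}$ and the root subalgebras) is a harmless, if anything slightly more careful, variant.

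There is, however, one incorrectly justified step. To get $[s,n]=0$ you assert that the adjoint action of $\mathfrak{g}^{\diamond}$ on $J_A$ factors through $d\pi$, ``as the conjugation action of $G^{\diamond}$ on $U_A$ factors through $\pi$.'' That assertion is false in general: since $U_A = \ker\pi$ (equivalently $J_A = \ker d\pi$), factoring through $\pi$ would force $U_A$ to act trivially on itself, i.e.\ $U_A$ abelian and $[J_A,J_A]=0$ --- and Section 2.3 of the paper emphasizes that $U_A$ is \emph{not} necessarily abelian, which is precisely why the Parshall--Scott construction of $G^{\diamond}$ cannot be reduced to a class in $H^2(G,U_A)$. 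What does factor through $\pi$ is the conjugation action of $G^{\diamond}$ on the \emph{other} factor, $j_{G_{(1)}}(G_{(1)})$ (you use this fact, correctly, to get $T$-equivariance of $dj_{G_{(1)}}$). Fortunately, the statement you actually need is just $[dj_{G_{(1)}}(\mathfrak{g}), J_A]=0$, and this is true: the paper records the group scheme isomorphism $G^{\diamond}_{(1)} \cong j_{G_{(1)}}(G_{(1)}) \times U_{A,(1)}$, and the Lie algebra of a direct product is a direct sum of ideals bracketing trivially with one another; equivalently, differentiate the trivial conjugation action of $U_A$ on $j_{G_{(1)}}(G_{(1)})$. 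Substituting this justification repairs the proof completely --- indeed it shows $\mathrm{ad}(s)$ annihilates all of $J_A$, so the observation that $n$ lies in the zero weight space becomes unnecessary.
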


\begin{proof}
Since $p>2$, we have that $\alpha \notin pX(T)$ for all $\alpha \in \Phi$.  Since $dj_H(\text{Lie}(U_{\alpha}))$ is a $j_H(T)$-weight space of weight $\alpha$, it must be contained in $dj_{G_{(1)}}(\mathfrak{g})$ since the weights of $\text{Lie}(U_A)$ are all in $pX(T)$.  Also, the only homomorphism of restricted Lie algebras from $\mathfrak{t}$ to $\text{Lie}(U_A)$ is the trivial homomorphism, hence we also get that $dj_H(\mathfrak{t}) \subseteq dj_{G_{(1)}}(\mathfrak{g})$.  Since $\mathfrak{h}$ is generated by these root subalgebras and $\mathfrak{t}$, this proves the statement.
\end{proof}


\section{Group Extensions of $B$}

In this section we study extensions of $B$, with an emphasis on extensions by $\mathbb{G}_a$.  However, we first observe that Proposition \ref{borel} can be strenghtened by replacing $B$ with $U$, as follows from the next result.

\begin{prop}\label{solvable}
Let $\mathcal{G}$ be a connected linear algebraic group, and consider a strictly exact sequence of algebraic groups
$$1 \rightarrow V \rightarrow H \xrightarrow{\pi} \mathcal{G} \rightarrow 1$$
such that $V$ is a vector group which is a rational $\mathcal{G}$-module under the conjugation action of $H$ on $V$.  Let $\mathcal{G}_1 \le \mathcal{G}$ be a connected closed subgroup for which the resulting chain map on Hochschild complexes induces an injective map $H^2(\mathcal{G},V) \rightarrow H^2(\mathcal{G}_1,V)$.  Then the sequence is split if and only if the resulting sequence for $\mathcal{G}_1$ is split.

In particular, this holds if $\mathcal{G}$ is solvable and $\mathcal{G}_1 = R_u(\mathcal{G})$.
\end{prop}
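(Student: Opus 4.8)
The plan is to deduce the equivalence from the classification of extensions by $H^2$ recalled in Section 2.4, and then to verify the stated injectivity directly in the solvable case.

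For the general statement I would invoke the bijection between equivalence classes of strictly exact extensions of $\mathcal{G}$ by the $\mathcal{G}$-module $V$ and the group $H^2(\mathcal{G},V)$: an algebraic section $j$ produces the $2$-cocycle $\beta_j$, and the extension splits precisely when $[\beta_j]=0$. Restricting $j$ to $\mathcal{G}_1$ yields a section of the pulled-back extension $\pi^{-1}(\mathcal{G}_1)\to\mathcal{G}_1$, and the associated cocycle $\beta_j|_{\mathcal{G}_1\times\mathcal{G}_1}$ is obtained by applying the chain map on Hochschild complexes to $\beta_j$; hence its class is exactly the image of $[\beta_j]$ under the induced map $H^2(\mathcal{G},V)\to H^2(\mathcal{G}_1,V)$. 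Consequently the extension over $\mathcal{G}_1$ splits if and only if this image vanishes. If the extension over $\mathcal{G}$ splits then its restriction does too, since the image of $0$ is $0$, and this direction needs no hypothesis; conversely, if the restricted extension splits then the image of $[\beta_j]$ vanishes, and injectivity of the chain map forces $[\beta_j]=0$, so the original extension splits.

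For the ``in particular'' statement the entire task reduces to checking that the restriction map $H^2(\mathcal{G},V)\to H^2(R_u(\mathcal{G}),V)$ is injective when $\mathcal{G}$ is connected solvable. I would use the structure theorem for connected solvable groups to write $\mathcal{G}=R_u(\mathcal{G})\rtimes T$ for a maximal torus $T$, producing the strictly exact sequence $1\to R_u(\mathcal{G})\to\mathcal{G}\to T\to 1$. The Lyndon--Hochschild--Serre spectral sequence for rational cohomology then reads
$$E_2^{i,j}=H^i\!\left(T,H^j(R_u(\mathcal{G}),V)\right)\Rightarrow H^{i+j}(\mathcal{G},V).$$
Since $T$ is a torus it is linearly reductive, so $E_2^{i,j}=0$ for $i>0$ and the spectral sequence collapses onto its bottom row, giving isomorphisms $H^n(\mathcal{G},V)\cong H^n(R_u(\mathcal{G}),V)^T$ realized by the edge homomorphism, which is exactly the restriction map. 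In particular restriction to $R_u(\mathcal{G})$ is injective in every degree, so the hypothesis of the general statement holds with $\mathcal{G}_1=R_u(\mathcal{G})$.

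The cocycle bookkeeping is routine; the points requiring care are the identification of the cohomological restriction map with the chain map on Hochschild complexes named in the hypothesis (so that ``splitting of the pulled-back extension'' and ``vanishing of the image class'' genuinely coincide), and the standard fact that the edge homomorphism of the spectral sequence is the restriction map. The main conceptual input, and the crux of the solvable case, is the vanishing of positive-degree rational cohomology of the torus $T$, which is what ultimately forces the injectivity.
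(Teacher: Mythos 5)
Your proposal is correct and takes essentially the same route as the paper: the first part is the identical cocycle-restriction argument (restrict the section, note the restricted cocycle's class is the image under the chain map, and invoke injectivity). For the solvable case the paper obtains the same key fact --- that restriction gives an isomorphism $H^n(\mathcal{G},V)\xrightarrow{\sim} H^n(R_u(\mathcal{G}),V)^{\mathcal{G}/R_u(\mathcal{G})}$, hence injectivity --- by citing Mac Lane's results on the Lyndon (LHS) spectral sequence and its edge/restriction maps, which is exactly the spectral-sequence collapse you derive explicitly from linear reductivity of the torus quotient.
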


\begin{proof}
The first statement is proved by following part of the argument used in the proof of \cite[5.4.1]{M}.  Let $j: \mathcal{G} \rightarrow H$ be an algebraic section to $\pi$, and the $2$-cocycle $\beta_j: \mathcal{G} \times \mathcal{G} \rightarrow V$ be as in Section 2.4.  Restricting $\beta_j$ to $\mathcal{G}_1 \times \mathcal{G}_1$, we have that $[{\beta_j}\mid _{\mathcal{G}_1 \times \mathcal{G}_1}]=0$ if and only if the sequence
$$1 \rightarrow V \rightarrow \pi^{-1}(\mathcal{G}_1) \xrightarrow{\pi} \mathcal{G}_1 \rightarrow 1$$
is split.  But we are assuming that $[{\beta_j}\mid _{\mathcal{G}_1 \times \mathcal{G}_1}]=0$ implies $[\beta_j]=0$, so the result follows.

If $\mathcal{G}$ is solvable, then there are isomorphisms
$$H^n(\mathcal{G},V) \xrightarrow{\sim} H^n(R_u(\mathcal{G}),V)^{\mathcal{G}/R_u(\mathcal{G})}$$
which come via the natural restriction maps on cohomology
$$H^n(\mathcal{G},V) \rightarrow H^n(R_u(\mathcal{G}),V)$$
as shown in Proposition XI.10.2 and Lemma XI.9.1 of \cite{Mac}.  But these restriction maps are induced by the chain map between the respective Hochschild complexes, as given in equation (9.4) of \cite[XI.9]{Mac}. 
\end{proof}

\begin{thm}
Let $p>2$.  If $Q_1(\lambda)$ lifts to $G_{(1)}U$, then it lifts to $G$.
\end{thm}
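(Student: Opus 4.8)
The plan is to reduce the lifting of $Q_1(\lambda)$ to $G$ to the lifting over $G_{(1)}U$ by climbing down the two previously-established reductions and then supplying the one new ingredient. By Theorem~\ref{sectiontolift} (via Proposition~\ref{borel}), an algebraic group section $j_G : G \to G^{\diamond}$ splitting $\pi$ exists if and only if $B$ splits the restricted extension, i.e. if and only if there is a homomorphism $j_B : B \to \pi^{-1}(B)$ splitting $\pi$. So it suffices to produce such a $j_B$. The whole point of the present section is that Proposition~\ref{solvable}, applied to $\mathcal{G}=B$ with $\mathcal{G}_1 = R_u(B) = U$, lets us replace $B$ by its unipotent radical $U$ at the level of each vector-group layer: for each of the simple $B$-module quotients $V_i$ from Proposition~\ref{filtration} (taken relative to $B$), the restriction $H^2(B,V_i) \to H^2(U,V_i)$ is injective, so the layer-$i$ extension of $B$ splits precisely when the corresponding extension of $U$ splits. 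Thus if the full extension of $U$ by $U_A$ splits, then each layer of the $B$-extension splits, and one can build $j_B$ inductively up the central series $\{U_i\}$.

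First I would set up the induction along the $U_A$-central series $U_A = U_1 \ge \cdots \ge U_n = 1$ from Proposition~\ref{filtration} applied to $H = B$, so that each $U_i/U_{i+1}$ is $B$-equivariantly a vector group isomorphic to a simple $B$-module $V_i$. At each stage one has a strictly exact extension of $B$ by the vector group $U_i/U_{i+1}$, and by the vector-group cohomology theory recalled in Section~2.4 this extension is governed by a class in $H^2(B, V_i)$. The hypothesis is that $Q_1(\lambda)$ lifts to $G_{(1)}U$; translated through Proposition~\ref{extend} and the machinery of Section~2.3, this says exactly that the analogous extension of $U$ by $U_A$ is split, hence that each restricted class in $H^2(U,V_i)$ vanishes. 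Invoking the injectivity of $H^2(B,V_i) \to H^2(U,V_i)$, each class in $H^2(B,V_i)$ itself vanishes, so every layer of the $B$-extension splits and we obtain $j_B$; Proposition~\ref{borel} then promotes this to $j_G$, and the resulting morphism splits $\pi$. Finally, because $p > 2$ and $B$ is generated by $T$ together with root subgroups, Proposition~\ref{rootgen} guarantees $dj_B(\mathfrak{b}) \subseteq dj_{G_{(1)}}(\mathfrak{g})$, and the argument of Proposition~\ref{borel} transports this compatibility to all of $G$, so the lift is genuinely a $G$-module restricting to the given $G_{(1)}$-structure on $Q_1(\lambda)$.

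The key technical point, and the step I expect to be the main obstacle, is verifying the injectivity of the restriction map $H^2(B,V_i) \to H^2(U,V_i)$ for every simple $B$-module $V_i$ arising as a layer. This is exactly the specialization of Proposition~\ref{solvable} to the solvable group $B$: there the isomorphism $H^n(B,V) \xrightarrow{\sim} H^n(U,V)^{B/U}$ (from the Lyndon--Hochschild--Serre style collapse in \cite{Mac}) shows that the restriction to $U$ is injective on $H^2$, since $B/U \cong T$ is linearly reductive and the restriction map lands in the $T$-invariants. One must be careful that these cohomological restriction maps are the ones induced by the chain map on Hochschild complexes, so that they are compatible with the geometric splitting criterion of Section~2.4; this compatibility is precisely what Proposition~\ref{solvable} was formulated to record. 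Granting that, the remaining bookkeeping---matching the $B$-relative central series to the $U$-relative one and checking that vanishing of the $U$-classes follows from the assumed splitting over $G_{(1)}U$---is routine, with the hypothesis $p>2$ entering only through Proposition~\ref{rootgen} to secure the Lie-algebra containment needed for Proposition~\ref{extend}.
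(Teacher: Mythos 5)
Your second half---Proposition \ref{solvable} applied to $B$ with $\mathcal{G}_1 = U$, then Proposition \ref{borel} to pass from $B$ to $G$, then Propositions \ref{rootgen} and \ref{extend} to turn the group-theoretic section into a module lift---is exactly the paper's route. But there is a genuine gap at the start: you claim that the hypothesis ``$Q_1(\lambda)$ lifts to $G_{(1)}U$'' translates, ``through Proposition \ref{extend} and the machinery of Section 2.3,'' into the statement that the extension $1 \rightarrow U_A \rightarrow \pi^{-1}(U) \rightarrow U \rightarrow 1$ is split. Proposition \ref{extend} only goes in the opposite direction: a section $H \rightarrow G^{\diamond}$ (with the Lie-algebra compatibility) produces a lift to $G_{(1)}H$. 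Nothing in Sections 2--4 gives the converse, and it is not formal: the hypothesized lift is an abstract homomorphism $\phi: G_{(1)}U \rightarrow GL(Q_1(\lambda))$, and a priori $\phi(U)$ need not have anything to do with $G^*$ or $G^{\diamond}$ at all.

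Establishing this converse is precisely the content of the paper's proof, and it takes real work. One first conjugates so that $\phi = \sigma^{\diamond}\circ j_{G_{(1)}}$ on $G_{(1)}$; then, since $\phi(u)$ and $\sigma(u)$ induce the same conjugation action on $\phi(G_{(1)})$, the element $\phi(u)\sigma(u)^{-1}$ centralizes the $G_{(1)}$-action, i.e.\ lies in $\text{End}_{G_{(1)}}(Q_1(\lambda))^{\times}$, which is the product of $U_A$ with the invertible scalars; unipotence of $\phi(u)$ then forces $\phi(u) \in G^*$, so $\phi$ restricts to a homomorphism $U \rightarrow G^*$. Even then one is not done: a splitting must land in $G^{\diamond}$, not in $G^*$, and the passage uses Lemma \ref{kernel}---the kernel of $\sigma^{\diamond}: G^{\diamond} \rightarrow G^*$ is the diagonalizable central subgroup $K \times \{1\}$, so it meets the unipotent radical of $(\sigma^{\diamond})^{-1}(\phi(U))$ trivially, whence $\sigma^{\diamond}$ identifies that unipotent radical with $\phi(U)$ and yields the desired section $U \rightarrow \pi^{-1}(U)$. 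Without this step your argument never connects the hypothesis to the group extension, so the proof as proposed does not go through. (A minor further point: at the end you should apply Proposition \ref{rootgen} directly to $H=G$, which is itself generated by $T$ and root subgroups; Proposition \ref{borel} says nothing about differentials, so it cannot ``transport'' the containment $dj_B(\mathfrak{b}) \subseteq dj_{G_{(1)}}(\mathfrak{g})$ from $B$ up to $G$.)
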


\begin{proof}
Suppose there is a lift to $G_{(1)}U$, and let
$$\phi: G_{(1)}U \rightarrow GL(Q_1(\lambda))$$
be the morphism defining the representation.  After conjugating if necessary, we may assume that on $G_{(1)}$, $\phi = \sigma^{\diamond} \circ j_{G_{(1)}}$.  For each $u \in U$, we have that $\phi(u)$ and $\sigma(u)$ act in the same way on $\phi(G_{(1)})$, which implies that
$$\phi(u)\sigma(u)^{-1} \in \text{End}_{G_{(1)}}(Q_1(\lambda))^{\times}.$$
But the group on the right is the product of $U_A$ and the group of invertible scalar matrices, so we then have
$$\phi(u) \in Z(GL(Q_1(\lambda))\cdot G^*$$
which by the unipotence of $\phi(u)$ implies that $\phi(u) \in G^*$, so that $\phi: U \rightarrow G^*$.  In view of Lemma \ref{kernel}, the homomorphism
$\sigma^{\diamond}$ must define an isomorphism between the unipotent radical of $(\sigma^{\diamond})^{-1}(\phi(U))$ and $\phi(U)$, so that we in fact get
a homomorphism from $U$ to $G^{\diamond}$ which splits the sequence
$$1 \rightarrow U_A \rightarrow \pi^{-1}(U) \xrightarrow{\pi} U \rightarrow 1.$$
By the Proposition \ref{borel}, together with the previous proposition, this implies that there is an algebraic group section $G \rightarrow G^{\diamond}$ to $\pi$.  By Propositions \ref{rootgen} and \ref{extend}, this section, composed with $\sigma^{\diamond}$, extends the $G_{(1)}$-structure on $Q_1(\lambda)$.
\end{proof}

Let $B^{\diamond} = \pi^{-1}(B)$.  By Proposition \ref{filtration}, we can choose a filtration $\{U_i\}$ of $U_A$ consisting of normal subgroups in $B^{\diamond}$ for which the quotients $U_i/U_{i+1}$ are connected $1$-dimensional unipotent groups (necessarily isomorphic to $\mathbb{G}_a$).  The action of $B$ on these quotients is by some character $\lambda:B \rightarrow \mathbb{G}_m$, which must satisfy the conditions in Proposition \ref{weights}.  The following technical lemma will be needed later.  Let $\pi_i$ denote the surjective morphism
$$\pi_i: B^{\diamond} \rightarrow B^{\diamond}/U_i.$$

\begin{lemma}\label{adjoint}
Let $p>2$, and let $H \le B$ be a subgroup generated by $T$ and root subgroups.  Suppose there exists an algebraic group homomorphism $f: H \rightarrow B^{\diamond}/U_i$ such that the composite homomorphism to $B$ (after further modding out by $(U_A/U_i)$) is the identity on $H$.  Then
$$\textup{Lie}(\pi_i^{-1}(f(H))) \cong \textup{Lie}(U_i) \oplus dj_{G_{(1)}}(\mathfrak{h})$$
and this decomposition is stable under the adjoint action of $\pi_i^{-1}(f(H))$. 
\end{lemma}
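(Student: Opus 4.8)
The plan is to study $\mathrm{Lie}(S)$, where I abbreviate $S := \pi_i^{-1}(f(H))$, by means of the adjoint action of a suitable lift of the torus $T$, exploiting—exactly as in Proposition~\ref{rootgen}—that the $T$-weights of $\mathrm{Lie}(U_A)$ all lie in $pX(T)$ while the roots do not (using $p>2$).

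First I would set up the reductions. Since $f$ is a section of $B^{\diamond}/U_i \to B$ over $H$ it is a closed immersion, so $f(H) \cong H$ is smooth; as $\pi_i$ is the smooth quotient by the vector group $U_i$, its preimage $S$ is smooth with $\mathrm{Lie}(S) = (d\pi_i)^{-1}(\mathrm{Lie}(f(H)))$ and $\dim\mathrm{Lie}(S) = \dim U_i + \dim\mathfrak{h}$. Now $\mathrm{Lie}(U_i) = \ker(d\pi_i) \subseteq \mathrm{Lie}(S)$, and since $\mathrm{Lie}(U_i)\subseteq\mathrm{Lie}(U_A)$ meets $dj_{G_{(1)}}(\mathfrak{g})$ trivially, the sum $\mathrm{Lie}(U_i) + dj_{G_{(1)}}(\mathfrak{h})$ is automatically direct and of the correct dimension. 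Thus the whole problem reduces to proving the single containment $dj_{G_{(1)}}(\mathfrak{h}) \subseteq \mathrm{Lie}(S)$.

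Next I would introduce the torus. Lifting $f(T)$ to a maximal torus $\widetilde{T}$ of $\pi_i^{-1}(f(T)) \subseteq S$ gives a subgroup $\widetilde{T}\subseteq S$ mapping isomorphically to $T$ under $\pi$. Both $dj_{G_{(1)}}(\mathfrak{g})$ and $\mathrm{Lie}(U_A)$ are Lie algebras of subgroup schemes normal in $B^{\diamond}$, hence $\mathrm{Ad}(\widetilde{T})$-stable, so $\mathrm{Lie}(B^{\diamond}) = dj_{G_{(1)}}(\mathfrak{b}) \oplus \mathrm{Lie}(U_A)$ decomposes into $\widetilde{T}$-weight spaces. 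Because conjugation on the normal subgroup scheme $j_{G_{(1)}}(G_{(1)})$ factors through $\pi$, the map $dj_{G_{(1)}}$ is $\widetilde{T}$-equivariant, so the weights on $dj_{G_{(1)}}(\mathfrak{b})$ are precisely those of $\mathfrak{b}$, namely $0$ and the positive roots. For the weights on $\mathrm{Lie}(U_A)$ I would pass to the associated graded of the filtration $\{U_j\}$: since $U_A$ acts trivially on each $U_j/U_{j+1}$ (Proposition~\ref{filtration}), the $\widetilde{T}$-weights on $\mathrm{Lie}(U_A)$ agree with those of Proposition~\ref{weights} and lie in $pX(T)$. With these weights in hand the containment follows term by term. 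For the toral part, $\mathrm{Lie}(\widetilde{T}) \subseteq \mathrm{Lie}(S)$, and the toral-versus-nilpotent argument of Proposition~\ref{rootgen} forces $\mathrm{Lie}(\widetilde{T}) = dj_{G_{(1)}}(\mathfrak{t})$. For each root $\alpha$ with $U_\alpha \le H$, the line $df(\mathfrak{g}_\alpha)$ is the $f(T)$-weight-$\alpha$ space of $\mathrm{Lie}(f(H))$; since $\alpha\notin pX(T)$, the weight-$\alpha$ space of $\mathrm{Lie}(B^{\diamond})$ is exactly $dj_{G_{(1)}}(\mathfrak{g}_\alpha)$ and maps isomorphically under $d\pi_i$, whence $d\pi_i(dj_{G_{(1)}}(\mathfrak{g}_\alpha)) = df(\mathfrak{g}_\alpha) \subseteq \mathrm{Lie}(f(H))$ and therefore $dj_{G_{(1)}}(\mathfrak{g}_\alpha)\subseteq\mathrm{Lie}(S)$. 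As $\mathfrak{h}$ is spanned by $\mathfrak{t}$ and these root spaces, this yields $dj_{G_{(1)}}(\mathfrak{h})\subseteq\mathrm{Lie}(S)$ and the desired decomposition.

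Finally, for the $\mathrm{Ad}(S)$-stability I would exhibit both summands intrinsically: $\mathrm{Lie}(U_i)$ is stable because $U_i$ is normal in $B^{\diamond}$, and $dj_{G_{(1)}}(\mathfrak{h}) = dj_{G_{(1)}}(\mathfrak{g}) \cap \mathrm{Lie}(S)$ is the intersection of two $\mathrm{Ad}(S)$-stable subspaces. The hard part will be the weight bookkeeping for $\widetilde{T}$: this torus need not coincide with the section torus for which Proposition~\ref{weights} was stated, so the crux is the argument that its weights on $\mathrm{Lie}(U_A)$ still lie in $pX(T)$, which is precisely where the triviality of the $U_A$-action on the graded pieces $U_j/U_{j+1}$ is indispensable.
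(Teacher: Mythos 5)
Your proposal is correct and takes essentially the same approach as the paper: both arguments rest on the decomposition $\mathrm{Lie}(B^{\diamond}) = dj_{G_{(1)}}(\mathfrak{b}) \oplus \mathrm{Lie}(U_A)$, the weight/toral argument of Proposition \ref{rootgen} (roots lie outside $pX(T)$ while all $T$-weights of $\mathrm{Lie}(U_A)$ lie in $pX(T)$, and a torus has no nonzero restricted homomorphism to a nilpotent algebra), and deduce $\mathrm{Ad}$-stability from the stability of $\mathrm{Lie}(U_A)$ and $dj_{G_{(1)}}(\mathfrak{g})$ under the adjoint action of $G^{\diamond}$. The only difference is organizational: the paper runs the weight argument directly on $df$ in the quotient $B^{\diamond}/U_i$ and takes preimages, whereas you lift a maximal torus $\widetilde{T}$ upstairs and add a dimension count, which amounts to the same computation.
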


\begin{proof}
We have that
\begin{eqnarray*}
\text{Lie}(B^{\diamond}/U_i) & = & d\pi_i(\text{Lie}(U_A)) \oplus d\pi_i(dj_{G_{(1)}}(\mathfrak{b}))\\
& \cong & \text{Lie}(U_A)/\text{Lie}(U_i) \oplus dj_{G_{(1)}}(\mathfrak{b}).
\end{eqnarray*}
By the argument in Proposition \ref{rootgen} it follows that
$$df(\mathfrak{h}) = d\pi_i(dj_{G_{(1)}}(\mathfrak{h})).$$
Thus
\begin{eqnarray*}
\text{Lie}(\pi_i^{-1}(f(H))) & = & \textup{Lie}(U_i) \oplus dj_{G_{(1)}}(\mathfrak{h})\\
& = & \text{Lie}(\pi_i^{-1}(f(H))) \cap \text{Lie}(U_A) \oplus \text{Lie}(\pi_i^{-1}(f(H))) \cap dj_{G_{(1)}}(\mathfrak{g}).\\
\end{eqnarray*}
The stability under the adjoint action then follows from the stability of $\text{Lie}(U_A)$ and $dj_{G_{(1)}}(\mathfrak{g})$ under the adjoint action of $G^{\diamond}$.

\end{proof}

We now point out the usefulness of this lemma.  Suppose that $H$ is generated by $T$ and $U_{\alpha}$ for some $\alpha \in \Pi$, and suppose that there is a homomorphism $f:H \rightarrow B^{\diamond}/U_i$ as in the lemma.  We then obtain a strictly exact sequence
$$1 \rightarrow U_i/U_{i+1} \rightarrow \pi_i^{-1}(f(H))/U_{i+1} \rightarrow H \rightarrow 1$$
The unipotent radical of $\pi^{-1}(f(H))/U_{i+1}$ is then $2$-dimensional (being an extension of $U_{\alpha}$ by $U_i/U_{i+1} \cong \mathbb{G}_a$).  The classification of connected unipotent groups of dimension $2$ is known, and the previous lemma tells us in this case that
$$\text{Lie}(R_u(\pi^{-1}(f(H))/U_{i+1})) \cong \text{Lie}(\mathbb{G}_a) \oplus \text{Lie}(\mathbb{G}_a)$$
and that the adjoint action of $R_u(\pi^{-1}(f(H))/U_{i+1})$ is trivial (as it stabilizes both factors).  This then further limits the possible group structures for\\ $R_u(\pi^{-1}(f(H))/U_{i+1})$.


\section{Specific Applications to Lifting Problem}

We conclude by showing how the work in the preceding sections can be used to obtained liftings of the PIMs to infinitesimal thickenings of particular subgroups of $G$.  Before stating our first result, we recall an important general lemma which is stated and proved in the proof of \cite[Proposition II.11.16]{J1}.

\begin{lemma}\label{injectivequotients}
Let $\mathcal{G}$ be an affine group scheme over $k$, and $N$ a normal subgroup scheme.  Let $V_1$ be a $\mathcal{G}$-module which is injective over $N$, and $V_2$ an injective $\mathcal{G}/N$-module.  Then $V_1 \otimes V_2$ is an injective $\mathcal{G}$-module.
\end{lemma}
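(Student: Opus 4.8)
The plan is to verify injectivity of $V_1 \otimes V_2$ through the cohomological criterion: a rational $\mathcal{G}$-module $W$ is injective precisely when $\textup{Ext}^1_{\mathcal{G}}(M,W) = 0$ for every \emph{finite-dimensional} $\mathcal{G}$-module $M$. First I would record this reduction, which rests only on the local finiteness of rational representations (every module is the directed union of its finite-dimensional submodules, and the submodule generated by a single vector is finite-dimensional). Embedding $W$ into an injective $I$ with cokernel $C$, one constructs a $\mathcal{G}$-retraction $I \to W$ by a Zorn's lemma argument: a maximal partial retraction defined on some $W \le M_0 \le I$ must have $M_0 = I$, since otherwise one enlarges $M_0$ by a single vector, the obstruction to extending the retraction lies in $\textup{Ext}^1_{\mathcal{G}}$ against the finite-dimensional quotient, and this vanishes by hypothesis. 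Thus it suffices to fix a finite-dimensional $M$ and show $\textup{Ext}^1_{\mathcal{G}}(M,V_1 \otimes V_2) = 0$.

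The main computation runs through the Lyndon--Hochschild--Serre spectral sequence for the normal subgroup scheme $N \triangleleft \mathcal{G}$,
$$E_2^{i,j} = H^i\!\left(\mathcal{G}/N,\ \textup{Ext}^j_N(M,V_1 \otimes V_2)\right) \Longrightarrow \textup{Ext}^{i+j}_{\mathcal{G}}(M,V_1 \otimes V_2),$$
where the $\textup{Ext}^j_N$-terms carry their natural $\mathcal{G}/N$-module structure. Since $V_2$ is inflated from $\mathcal{G}/N$ it is $N$-trivial, and because $M$ is finite-dimensional the functor $\textup{Ext}^j_N(M,-)$ commutes with the directed colimit of $V_2$ over its finite-dimensional subspaces; this lets me pull $V_2$ out to obtain a $\mathcal{G}/N$-isomorphism $\textup{Ext}^j_N(M,V_1 \otimes V_2) \cong \textup{Ext}^j_N(M,V_1) \otimes V_2$. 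As $V_1$ is injective over $N$, these vanish for $j \ge 1$ and equal $\textup{Hom}_N(M,V_1) \otimes V_2$ when $j = 0$. The spectral sequence therefore collapses onto its bottom row, giving
$$\textup{Ext}^i_{\mathcal{G}}(M,V_1 \otimes V_2) \cong H^i\!\left(\mathcal{G}/N,\ \textup{Hom}_N(M,V_1) \otimes V_2\right).$$

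To finish I would invoke the tensor identity: for any affine group scheme and any injective module $J$ over it, the tensor product $X \otimes J$ with an \emph{arbitrary} module $X$ is again injective. This holds because $J$ is a direct summand of a cofree module $E \otimes k[\,\cdot\,]$, while the standard isomorphism $X \otimes k[\,\cdot\,] \cong X_{\mathrm{triv}} \otimes k[\,\cdot\,]$ realizes $X \otimes J$ as a summand of a cofree module. Applying this over $\mathcal{G}/N$ with $J = V_2$ and $X = \textup{Hom}_N(M,V_1)$ shows the coefficient module above is $\mathcal{G}/N$-injective, so $H^i$ of it vanishes for $i \ge 1$; in particular $\textup{Ext}^1_{\mathcal{G}}(M,V_1 \otimes V_2) = 0$, which completes the argument.

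I expect the genuine difficulty here to be bookkeeping rather than conceptual. The conceptual engine is the tensor identity, which absorbs an injective factor under tensoring, and the spectral sequence merely organizes the descent along $N$. The steps demanding care are the reduction to finite-dimensional test modules $M$ and, relatedly, the interchange of $\textup{Ext}^{*}_N(M,-)$ with the colimit presenting the possibly infinite-dimensional module $V_2$, together with checking that the isomorphism pulling $V_2$ out of $\textup{Ext}^{*}_N$ respects the $\mathcal{G}/N$-action. Once the finite-dimensionality of $M$ is secured these become routine, but they are precisely what keeps the argument valid for infinite-dimensional injectives.
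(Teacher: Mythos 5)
Your proof is correct. A point of comparison is slightly awkward here because the paper itself does not prove this lemma at all: it is quoted from Jantzen \cite{J1}, where it is established inside the proof of Proposition II.11.16. Jantzen's argument is essentially the underived version of yours. He writes $\textup{Hom}_{\mathcal{G}}(M, V_1\otimes V_2) \cong \bigl(\textup{Hom}_N(M,V_1)\otimes V_2\bigr)^{\mathcal{G}/N}$ (using that $N$ acts trivially on $V_2$ to pull it out of $\textup{Hom}_N$), observes that $M \mapsto \textup{Hom}_N(M,V_1)\otimes V_2$ is exact because $V_1$ is $N$-injective, and then notes that applying $\mathcal{G}/N$-fixed points preserves exactness of the resulting sequences because their terms have the form $X \otimes V_2$, which are $\mathcal{G}/N$-injective by the very tensor-identity fact you invoke; hence $\textup{Hom}_{\mathcal{G}}(-, V_1\otimes V_2)$ is exact and $V_1\otimes V_2$ is injective. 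So your spectral-sequence packaging rests on exactly the same three ingredients (extracting the $N$-trivial factor $V_2$, vanishing of higher $\textup{Ext}_N$ into $V_1$, and $\mathcal{G}/N$-injectivity of $X\otimes V_2$), but at the cost of a preliminary reduction to finite-dimensional test modules $M$, which you correctly identify as necessary both for the criterion $\textup{Ext}^1_{\mathcal{G}}(M,-)=0$ and for the $\textup{Ext}$-form of the Lyndon--Hochschild--Serre spectral sequence; the purely functorial argument sidesteps that reduction entirely. One small simplification available to you: since $k$ is a field, tensoring a complex of vector spaces with $V_2$ automatically commutes with taking cohomology, so the directed-colimit argument over finite-dimensional pieces of $V_2$ is not actually needed.
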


\begin{thm}\label{Wlifts}
Let $W^{\prime} \le W$ be such that $p \nmid |W^{\prime}|$, and let $N_{W^{\prime}} \le N_G(T)$ be the inverse image of $W^{\prime}$ under the surjection $N_G(T) \rightarrow W$.  Then $\widehat{Q}_1(\lambda)$ lifts to $G_{(1)}N_{W^{\prime}}$, and there is a unique such lift whose socle is isomorphic to the restriction of $L(\lambda)$ to $G_{(1)}N_{W^{\prime}}$.  In particular, if $p \nmid |W|$, $\widehat{Q}_1(\lambda)$ lifts to a $G_{(1)}N_G(T)$-module.
\end{thm}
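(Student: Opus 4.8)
The plan is to produce the lift by constructing an algebraic group section of $\pi$ over $N_{W^{\prime}}$ and invoking Proposition \ref{extend}. First I would apply Proposition \ref{filtration} to the algebraic subgroup $H = N_{W^{\prime}}$, obtaining a $U_A$-central series $U_A = U_1 \ge \cdots \ge U_n = 1$ by subgroups normal in $\pi^{-1}(N_{W^{\prime}})$ whose successive quotients $U_i/U_{i+1}$ are vector groups $N_{W^{\prime}}$-equivariantly isomorphic to simple rational $N_{W^{\prime}}$-modules $V_i$. By Proposition \ref{splitting} it then suffices to show $H^2(N_{W^{\prime}}, V_i) = 0$ for every $i$; the vanishing of $H^1(N_{W^{\prime}}, V_i)$ will additionally yield uniqueness of the section up to conjugacy in $G^{\diamond}$.

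The heart of the argument is this cohomology computation, and it is precisely where $p \nmid |W^{\prime}|$ enters. The identity component of $N_{W^{\prime}}$ is $T$, with $N_{W^{\prime}}/T \cong W^{\prime}$ a finite constant group scheme, so I would feed the sequence $1 \to T \to N_{W^{\prime}} \to W^{\prime} \to 1$ into the Lyndon--Hochschild--Serre spectral sequence for rational cohomology \cite[I.6.6]{J1},
\[ E_2^{s,t} = H^s(W^{\prime}, H^t(T, V_i)) \Longrightarrow H^{s+t}(N_{W^{\prime}}, V_i). \]
Since $T$ is a torus, every rational $T$-module is semisimple and $H^t(T, V_i) = 0$ for $t \ge 1$; since $p \nmid |W^{\prime}|$, the algebra $kW^{\prime}$ is semisimple and $H^s(W^{\prime}, M) = 0$ for $s \ge 1$ and every $W^{\prime}$-module $M$. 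Hence every $E_2^{s,t}$ with $s+t \ge 1$ vanishes, so $H^n(N_{W^{\prime}}, V_i) = 0$ for all $n \ge 1$ and all $i$. In particular both $H^2$ and $H^1$ vanish, and Proposition \ref{splitting} produces $j_{N_{W^{\prime}}} : N_{W^{\prime}} \to G^{\diamond}$ with $\pi \circ j_{N_{W^{\prime}}} = \mathrm{id}$, unique up to conjugacy.

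To apply Proposition \ref{extend} I must verify $dj_{N_{W^{\prime}}}(\mathfrak{n}_{W^{\prime}}) \subseteq dj_{G_{(1)}}(\mathfrak{g})$. As the identity component of $N_{W^{\prime}}$ is $T$, we have $\mathfrak{n}_{W^{\prime}} = \mathfrak{t}$, and the toral part of the argument in the proof of Proposition \ref{rootgen} applies: restricting $j_{N_{W^{\prime}}}$ to $T_{(1)}$ and projecting $G^{\diamond}_{(1)} \cong j_{G_{(1)}}(G_{(1)}) \times U_{A,(1)}$ onto its unipotent factor gives a homomorphism from the diagonalizable group scheme $T_{(1)}$ to the infinitesimal unipotent group $U_{A,(1)}$, which is necessarily trivial, so $dj_{N_{W^{\prime}}}(\mathfrak{t}) \subseteq dj_{G_{(1)}}(\mathfrak{g})$. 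Proposition \ref{extend} then yields a lift of $Q_1(\lambda)$ to $G_{(1)}N_{W^{\prime}}$. To identify it I note that every element of $J_A$ annihilates the $G_{(1)}$-socle of $Q_1(\lambda)$, so $U_A$ acts trivially there and $G^{\diamond}$ acts on the socle through $\pi$; thus $N_{W^{\prime}}$ acts on it exactly as on the $G$-submodule $L(\lambda)$ of the ambient tilting module, and the $G_{(1)}N_{W^{\prime}}$-socle is $L(\lambda)|_{G_{(1)}N_{W^{\prime}}}$. Restricting to $G_{(1)}T$ the socle becomes $L(\lambda)|_{G_{(1)}T} \cong \widehat{L}_1(\lambda)$, which forces the $G_{(1)}T$-structure to be its injective hull $\widehat{Q}_1(\lambda)$, as required.

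For injectivity and uniqueness I would use Lemma \ref{injectivequotients} with the normal subgroup scheme $G_{(1)}T \trianglelefteq G_{(1)}N_{W^{\prime}}$, whose quotient $G_{(1)}N_{W^{\prime}}/G_{(1)}T \cong W^{\prime}$: since $p \nmid |W^{\prime}|$ the trivial module $k$ is injective over $W^{\prime}$, and any $G_{(1)}N_{W^{\prime}}$-lift with socle $L(\lambda)|$ restricts to the injective module $\widehat{Q}_1(\lambda)$ over $G_{(1)}T$, so by the lemma it is injective over $G_{(1)}N_{W^{\prime}}$. Being injective with simple socle $L(\lambda)|$, it is the injective hull of $L(\lambda)|_{G_{(1)}N_{W^{\prime}}}$, hence unique up to isomorphism. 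Taking $W^{\prime} = W$ when $p \nmid |W|$ gives the final assertion for $G_{(1)}N_G(T)$. I expect the main obstacle to be the disconnectedness of $N_{W^{\prime}}$: one must check that the extension-theoretic machinery of Propositions \ref{filtration} and \ref{splitting}, recalled in \cite{M} for connected groups, still applies to a nonconnected base, and that the spectral sequence correctly incorporates the finite quotient $W^{\prime}$ --- it is exactly the coprimality $p \nmid |W^{\prime}|$ that collapses the spectral sequence and kills the obstruction in $H^2(N_{W^{\prime}}, V_i)$.
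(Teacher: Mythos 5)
Your proposal is correct, and its skeleton --- filtration from Proposition \ref{filtration}, cohomological vanishing forced by $p \nmid |W^{\prime}|$, Proposition \ref{splitting} to get a section $j_{N_{W^{\prime}}}$, triviality of maps from the toral part into the unipotent side to get $dj_{N_{W^{\prime}}}(\mathfrak{t}) \subseteq dj_{G_{(1)}}(\mathfrak{g})$, then Proposition \ref{extend} --- is exactly the paper's. The differences lie in two sub-steps. For the vanishing of $H^j(N_{W^{\prime}}, V_i)$, the paper does not run the Lyndon--Hochschild--Serre spectral sequence; it instead applies Lemma \ref{injectivequotients} to $T \trianglelefteq N_{W^{\prime}}$ (every module is injective over the torus $T$, and $k$ is injective over $W^{\prime}$ since $p \nmid |W^{\prime}|$) to conclude that \emph{every} $N_{W^{\prime}}$-module is injective, whence all higher cohomology vanishes. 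Your spectral-sequence computation reaches the same conclusion and is valid: the sequence of \cite[I.6.6]{J1} applies to $T \trianglelefteq N_{W^{\prime}}$ with quotient the finite constant group scheme $W^{\prime}$, whose rational cohomology is ordinary group cohomology and is killed by Maschke; also, your closing worry about nonconnectedness is moot, since Propositions \ref{filtration} and \ref{splitting} are stated in the paper for arbitrary algebraic subgroups $H \le G$, and the paper itself applies them to the nonconnected group $N_{W^{\prime}}$. For uniqueness, the paper argues via the conjugacy statement in Proposition \ref{splitting} (which does require the $H^1$-vanishing): this gives uniqueness among lifts whose image lands in $G^*$, and then Donkin's socle-comparison argument shows that any lift with socle $L(\lambda)\vert$ must factor through $G^*$. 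Your argument --- apply Lemma \ref{injectivequotients} to $G_{(1)}T \trianglelefteq G_{(1)}N_{W^{\prime}}$ with quotient $W^{\prime}$ to see that any such lift is injective over $G_{(1)}N_{W^{\prime}}$, hence is the injective hull of its simple socle and so unique up to isomorphism --- is precisely the observation the paper records in the Remark following the theorem; it is arguably cleaner, makes the $H^1$-vanishing and conjugacy statement unnecessary for the uniqueness claim, and yields the stronger conclusion that the lift is injective over $G_{(1)}N_{W^{\prime}}$.
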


\begin{proof}
Take a filtration $U_A = U_1 \ge \cdots \ge U_n$ relative to $G$ as in Proposition \ref{filtration}.  Now $N_{W^{\prime}}/T \cong W^{\prime}$, and since $p \nmid |W^{\prime}|$, $k$ is an injective $W^{\prime}$-module.  The previous lemma then implies that every $N_{W^{\prime}}$-module is injective.  Thus $H^j(N_{W^{\prime}},U_{i}/U_{i+1})=0$ for all $j>0$.  It follows from Proposition \ref{splitting} that there is a homomorphism $j_{N_{W^{\prime}}}: N_{W^{\prime}} \rightarrow G^{\diamond}$ such that $\pi \circ j_{N_{W^{\prime}}}$ is the identity on $N_{W^{\prime}}$, and that the image of $j_{N_{W^{\prime}}}$ is unique up to conjugation in $G^{\diamond}$.  The connected component of $N_{W^{\prime}}$ is $T$, hence its Lie algebra is just $\mathfrak{t}$.  We must have that $dj_{N_{W^{\prime}}}(\mathfrak{t}) \rightarrow \text{Lie}(U_A)$ is the zero map, so that $dj_{N_{W^{\prime}}}(\mathfrak{t}) \subseteq dj_{G_{(1)}}(\mathfrak{g})$.  By Proposition \ref{extend}, this implies that there is a lifting to $G_{(1)}N_{W^{\prime}}$, and that this lift must be unique among those that factor through $G^*$.  But we note that any lifting of $\widehat{Q}_1(\lambda)$ to $G_{(1)}N_{W^{\prime}}$ for which the $G_{(1)}N_{W^{\prime}}$-socle is isomorphic to the restriction of $L(\lambda)$ must be such that the image of $N_{W^{\prime}}$ is in $G^* \le GL(Q_1(\lambda))$ (by comparing the action on the socle, as in the argument used in the Section 2 of \cite{D1}).
\end{proof}

\begin{remark}
In fact, a stronger statement is true.  When the conditions in the proposition hold, Lemma \ref{injectivequotients} implies that the lift of $\widehat{Q}_1(\lambda)$ is injective over $G_{(1)}N_{W^{\prime}}$, meaning we can choose a decomposition of $T(2(p-1)\rho + w_0\lambda)$ as in Section 2.3 so that $Q_1(\lambda)$ is a $G_{(1)}N_{W^{\prime}}$-summand, and hence can choose $\sigma$ to be a group homomorphism when restricted to $N_{W^{\prime}}$.
\end{remark}

We conclude by showing that if $p \ge h-2$ and $G=SL_n$, then $\widehat{Q}_1(\lambda)$ extends to a $G_{(1)}L_I$-module when $I=\{\alpha\}$ for a single simple root.

\begin{thm}
Let $p > 3$, and suppose that there is some $\alpha \in \Pi$ such that $c\alpha$ is not a weight of $\textup{Lie}(U_A)$ for all $c \ge p^2$.  Then $\widehat{Q}_1(\lambda)$ lifts to a $G_{(1)}L_{\alpha}$-module.
\end{thm}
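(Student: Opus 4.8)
The plan is to produce an algebraic group homomorphism $j_{L_{\alpha}}:L_{\alpha}\rightarrow G^{\diamond}$ splitting $\pi$ over $L_{\alpha}$. Since $L_{\alpha}$ is generated by $T$ and the root subgroups $U_{\pm\alpha}$ and $p>2$, Proposition \ref{rootgen} guarantees $dj_{L_{\alpha}}(\mathfrak{l}_{\alpha})\subseteq dj_{G_{(1)}}(\mathfrak{g})$ automatically, so Proposition \ref{extend} will then deliver the desired lift of $\widehat{Q}_1(\lambda)$ to $G_{(1)}L_{\alpha}$. As $L_{\alpha}$ is reductive with Borel $B_{\alpha}=B\cap L_{\alpha}$, the argument of Proposition \ref{borel}, applied to the extension $1\rightarrow U_A\rightarrow \pi^{-1}(L_{\alpha})\rightarrow L_{\alpha}\rightarrow 1$ via the isomorphism $H^2(L_{\alpha},V_i)\cong H^2(B_{\alpha},V_i)$, reduces the existence of $j_{L_{\alpha}}$ to that of a splitting over $B_{\alpha}$. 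So the problem becomes: split the extension $1\rightarrow U_A\rightarrow \pi^{-1}(B_{\alpha})\rightarrow B_{\alpha}\rightarrow 1$.

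To split over $B_{\alpha}$ I would build the section through the filtration $U_A=U_1\ge\cdots\ge U_n=1$ of Proposition \ref{filtration}, taken relative to $B$, so that each $U_i/U_{i+1}\cong\mathbb{G}_a$ carries a character $\mu_i$ of $B$ which is a weight of $\mathrm{Lie}(U_A)$. Beginning with the inclusion $B_{\alpha}\hookrightarrow B=B^{\diamond}/U_A$, I would lift successively along the surjections $\pi_i$, the obstruction to promoting a section into $B^{\diamond}/U_i$ to one into $B^{\diamond}/U_{i+1}$ lying in $H^2(B_{\alpha},\mu_i)$. Since $B_{\alpha}$ is the Borel of a rank-one reductive group, Theorem \ref{2ndcoh} shows that $H^2(B_{\alpha},\mu_i)$ vanishes unless $\mu_i$ is an integer multiple of $\alpha$ of a very restricted shape; all such stages with vanishing cohomology lift freely. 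Combining the surviving shapes with Proposition \ref{weights}, which forces $\mu_i\in pX(T)\cap\mathbb{Z}\Phi$, and with the hypothesis that $c\alpha$ is not a weight for $c\ge p^2$, I expect to eliminate every obstruction except for a small number of low-height weights, the essential surviving one being $\mu_i=p\alpha$.

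The main obstacle is exactly this residual case $\mu_i=p\alpha$, where $H^2(B_{\alpha},p\alpha)\neq 0$ and the weight hypothesis gives no leverage. Here I would pass to the two-dimensional picture described after Lemma \ref{adjoint}, using Proposition \ref{solvable} to reduce the splitting at this stage to the splitting of the unipotent radical $R_u$ of $\pi_i^{-1}(f(B_{\alpha}))/U_{i+1}$, a two-dimensional connected unipotent extension of $U_{\alpha}$ by $U_i/U_{i+1}\cong\mathbb{G}_a$. Because $p\alpha$ is not of the form $(p^a+p^b)\alpha$, $T$-equivariance forces the commutator map to vanish, so $R_u$ is commutative and the only nonsplit possibility is the Witt group $W_2$. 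I would then rule $W_2$ out by its restricted structure: its Lie algebra is abelian, but the $p$-operation sends a complement of $\mathrm{Lie}(U_i/U_{i+1})$ nontrivially into $\mathrm{Lie}(U_i/U_{i+1})$. By contrast, Lemma \ref{adjoint} decomposes $\mathrm{Lie}(R_u)$ as $\mathrm{Lie}(U_i/U_{i+1})$ together with the image of $dj_{G_{(1)}}(\mathfrak{u}_{\alpha})$, and both summands are restricted subalgebras, the second because $j_{G_{(1)}}$ is a homomorphism of group schemes and hence $dj_{G_{(1)}}$ respects the $p$-operation. Thus the $p$-th power of a generator of the second summand cannot land in $\mathrm{Lie}(U_i/U_{i+1})$, contradicting the $W_2$ structure. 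Therefore $R_u\cong\mathbb{G}_a\times\mathbb{G}_a$, the section lifts at this stage, and running the induction to the bottom of the filtration produces the splitting over $B_{\alpha}$, hence over $L_{\alpha}$, giving the theorem.
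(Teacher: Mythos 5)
Your proposal tracks the paper's own proof almost step for step: the reduction to $B\cap L_{\alpha}=TU_{\alpha}$ via Propositions \ref{rootgen}, \ref{extend}, \ref{borel}, the induction along the filtration of Proposition \ref{filtration} with obstructions controlled by Theorem \ref{2ndcoh}, the elimination of every weight except $p\alpha$ using Proposition \ref{weights} and the hypothesis, and the endgame via Proposition \ref{solvable} and Lemma \ref{adjoint}. Your explicit appeal to the $p$-operation is in fact the right reading of the paper's terse claim that $\textup{Lie}(\mathcal{W}_2)\not\cong\textup{Lie}(\mathbb{G}_a)\oplus\textup{Lie}(\mathbb{G}_a)$, which is true only as \emph{restricted} Lie algebras; that part of your argument is correct and welcome.

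There is, however, one genuine gap: the assertion ``so $R_u$ is commutative and the only nonsplit possibility is the Witt group $\mathcal{W}_2$.'' Commutativity alone does not reduce you to $\mathcal{W}_2$. Commutative extensions of $\mathbb{G}_a$ by $\mathbb{G}_a$ correspond to \emph{symmetric} Hochschild $2$-cocycles, and by the classification the paper cites (\cite[Remark 2.1.6]{DFPS}) these are spanned by the Frobenius powers $\beta_0^{p^m}$, $m\ge 0$, of the Witt cocycle $\beta_0(x,y)=\sum_{\ell=1}^{p-1}\tfrac{(p-1)!}{\ell!(p-\ell)!}x^{\ell}y^{p-\ell}$. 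A nonzero class supported on the powers with $m\ge 1$ gives a commutative \emph{nonsplit} extension that is not isomorphic to $\mathcal{W}_2$, and---this is the fatal point for your argument---its Lie algebra carries the \emph{trivial} $p$-operation: the invariant derivation lifting $\partial/\partial x$ is
$$\partial/\partial x_0+\left.\tfrac{\partial}{\partial y}\bigl(\beta_0(x,y)^{p^m}\bigr)\right|_{y=0}\partial/\partial x_1=\partial/\partial x_0,$$
since the inner derivative is killed by the exponent $p^m$ when $m\ge 1$, and $(\partial/\partial x_0)^p=0$. Such a group therefore passes your restricted-Lie-algebra test, so your proof as written would wrongly conclude that $R_u$ splits in that case. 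What closes the gap---and what the paper actually does---is to apply $T$-equivariance a second time, to the cocycle itself rather than only to its antisymmetrization: a monomial $x^ay^b$ valued in the weight space of weight $p\alpha$ is $T$-fixed exactly when $a+b=p$, and $\beta_0^{p^m}$ has total degree $p^{m+1}$, so only $m=0$ survives. Hence the cocycle is a scalar multiple of $\beta_0$ itself, the only nonsplit possibility really is $\mathcal{W}_2$, and your $p$-operation argument then finishes the proof. The missing idea, in short, is that the weight pins down the total \emph{degree} of the cocycle, not merely its symmetry; with that one addition your argument is complete and coincides with the paper's.
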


\begin{proof}
By Propositions \ref{rootgen} and \ref{extend}, we want to find an algebraic group homomorphism $j_{L_{\alpha}}: L_{\alpha} \rightarrow G^{\diamond}$ such that $\pi \circ j_{L_{\alpha}}$ is the identity on $L_{\alpha}$.  By Proposition \ref{borel}, it suffices to find such a homomorphism for $B \cap L_{\alpha} = TU_{\alpha}$.
Take a $B$-filtration $\{U_i\}$ of $U_A$ as in Proposition \ref{filtration}.  Inductively, our problem reduces to repeatedly splitting extensions of the form
$$1 \rightarrow U_i/U_{i+1} \rightarrow H \rightarrow TU_{\alpha} \rightarrow 1$$
where $U_i/U_{i+1} \cong \mathbb{G}_a$, and the $T$-weight of the conjugation action on this group is a weight of $\text{Lie}(U_A)$.  By Proposition \ref{solvable}, this sequence splits if the sequence
$$1 \rightarrow U_i/U_{i+1} \rightarrow R_u(H) \rightarrow U_{\alpha} \rightarrow 1$$
splits, which is equivalent to saying that $R_u(H) \cong \mathbb{G}_a \times \mathbb{G}_a$.

Applying Lemma \ref{adjoint} (and the discussion following it) we have that
$$\text{Lie}(R_u(H)) \cong \text{Lie}(\mathbb{G}_a) \oplus \text{Lie}(\mathbb{G}_a),$$
and that the adjoint action of $R_u(H)$ on $\text{Lie}(R_u(H))$ is trivial.  Let $\lambda$ be the $T$-weight of $U_i/U_{i+1}$.  By Theorem \ref{2ndcoh}, the extension of $TU_{\alpha}$ by $U_i/U_{i+1}$ is automatically split unless $\lambda$ is either of the form $p^i\alpha$, with $i > 0$, or else of the form $p^i (1+ p^j)\alpha$, with $i\ge 0$ and $j >0$.  But we are assuming that $c\alpha$ is not a weight of $\text{Lie}(U_A)$ when $c \ge p^2$, further limiting our cases to $\lambda = p\alpha$ or $\lambda = (p+1)\alpha$.  Since $\langle (p+1)\alpha,\alpha^{\vee}\rangle = 2(p+1)$, which is not divisible by $p$ when $p\ne2$, we find that $(p+1)\alpha \notin pX(T)$, which by Proposition \ref{weights} leaves us to only check that the extension splits when $\lambda = p\alpha$.

We claim that the non-split extension corresponding to this weight occurs when $R_u(H)$ is isomorphic to the Witt group $\mathcal{W}_2$.  Indeed, the extension is determined by a $2$-cocycle
$$\beta: U_{\alpha} \times U_{\alpha} \rightarrow U_i/U_{i+1},$$
which is a morphism $\mathbb{G}_a \times \mathbb{G}_a \rightarrow \mathbb{G}_a$, and as such is given by a polynomial in two variables.  The cohomology class representatives of $H^2(\mathbb{G}_a,\mathbb{G}_a)$ are given in \cite[Remark 2.1.6]{DFPS}, where these polynomials have a $k$-basis consisting of the polynomials:
$$\left(\sum_{\ell = 1}^{p-1} \frac{(p-1)!}{\ell!(p-\ell)!}x^{\ell}y^{p-\ell}\right)^{p^i}, \quad i \ge 0,$$
and
$$\left(xy^{p^j}\right)^{p^i}, \quad j > 0, \, i \ge 0.$$

We have that the isomorphism
$$H^2(U_{\alpha},U_i/U_{i+1})^T \cong H^2(TU_{\alpha},U_i/U_{i+1})$$
is induced by the chain map between Hochschild complexes (see the proof of Proposition \ref{solvable}).  It follows from \cite[I.6.7]{J1} (after identifying the groups in the Hochschild complex with morphisms from $U_{\alpha}^{\times n}$ to $U_i/U_{i+1}$) that the $T$-action on the $2$-cocycle $\beta$ is given by
$$(t.\beta)(u_1,u_2) = t.(\beta(t^{-1}u_1t,t^{-1}u_2t))$$
It follows that if $\lambda = p\alpha$, then the non-split extension of $U_{\alpha}$ by $U_i/U_{i+1}$ whose $2$-cocycle is fixed by $T$ is a scalar multiple of the polynomial 
$$\sum_{\ell = 1}^{p-1} \frac{(p-1)!}{\ell!(p-\ell)!}x^{\ell}y^{p-\ell}$$
thus is isomorphic to $\mathcal{W}_2$.  But $\text{Lie}(\mathcal{W}_2) \not\cong \text{Lie}(\mathbb{G}_a) \oplus \text{Lie}(\mathbb{G}_a)$, hence $R_u(H) \not\cong \mathcal{W}_2$, showing that $R_u(H) \cong \mathbb{G}_a \times \mathbb{G}_a$, which by the reductions above proves the theorem.
\end{proof}

\begin{thm}\label{SLnLevi}
If $G=SL_n$ and $p \ge h-2$, then $Q_1(\lambda)$ lifts to $G_{(1)}L_{\alpha}$ for every simple root $\alpha$.
\end{thm}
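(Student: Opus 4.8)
The plan is to reduce the statement to the preceding theorem, whose criterion (valid for $p>3$) requires that some simple root $\alpha$ admit no multiple $c\alpha$ with $c\ge p^2$ as a weight of $\mathrm{Lie}(U_A)$. For $G=SL_n$ the Coxeter number is $h=n$, so the hypothesis $p\ge h-2$ is exactly $n\le p+2$. I would verify the weight condition for \emph{every} simple root simultaneously, exploiting that in type $A_{n-1}$ the Weyl group $W=S_n$ acts transitively on $\Phi$ (all roots being conjugate). Applying the preceding theorem once for each $\alpha$ then yields the lift to $G_{(1)}L_\alpha$ for all simple roots.

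The heart of the argument is a short weight estimate via Proposition \ref{weights}. Recall that $\mathrm{Lie}(U_A)=J_A\subseteq A=\mathrm{End}_{G_{(1)}}(\widehat{Q}_1(\lambda))$, so every weight $\mu$ of $\mathrm{Lie}(U_A)$ lies in $\mathbb{Z}\Phi$ and satisfies $-w(\mu)\le 2(p-1)\rho$ for all $w\in W$. Suppose $\mu=c\alpha$ were a weight for some simple $\alpha$. Writing $\theta=\alpha_1+\cdots+\alpha_{n-1}$ for the highest root, transitivity lets me pick $w\in W$ with $w\alpha=-\theta$, so that $-w(\mu)=c\theta$ and hence $c\theta\le 2(p-1)\rho$. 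This means $2(p-1)\rho-c\theta$ is a nonnegative integral combination of positive roots, so each of its simple-root coefficients is nonnegative. Using the standard expansion $\rho=\sum_{i=1}^{n-1}\tfrac{i(n-i)}{2}\,\alpha_i$, the coefficient of $\alpha_1$ in $2(p-1)\rho$ is $(p-1)(n-1)$, while in $c\theta$ it is $c$; comparing these forces $c\le (p-1)(n-1)$.

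Now the hypothesis $p\ge h-2$ gives $n-1\le p+1$, whence $(p-1)(n-1)\le (p-1)(p+1)=p^2-1<p^2$. Therefore any $c\ge p^2$ exceeds the bound $(p-1)(n-1)$, and $c\alpha$ cannot be a weight of $\mathrm{Lie}(U_A)$. Since this holds for every simple root $\alpha$, the preceding theorem applies (whenever $p>3$) to produce a $G_{(1)}L_\alpha$-lift of $\widehat{Q}_1(\lambda)$, and hence of $Q_1(\lambda)$ over $G_{(1)}$. I find it reassuring that the inequality $(p-1)(n-1)<p^2$ is equivalent to $p\ge h-2$, so the bound in the theorem is exactly what this computation delivers.

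The main obstacle is not the generic computation, which is clean, but the small-prime restriction $p>3$ inherited from the $B$-cohomology input (Theorem \ref{2ndcoh}) used in the preceding theorem. When $p\le 3$ the constraint $p\ge h-2$ forces $n\le 5$, leaving only finitely many cases to dispose of by other means: the lifting is already known in all characteristics for $SL_2$ (Jeyakumar) and $SL_3$, both of which in fact lift to $G$ and hence to $G_{(1)}L_\alpha$, while the residual cases ($SL_4$ for $p=2,3$ and $SL_5$ for $p=3$) would need to be checked directly. I would expect these low-rank, small-prime verifications — rather than the uniform weight estimate — to be where the genuine care is required.
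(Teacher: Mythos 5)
Your proposal follows essentially the same route as the paper's proof: a weight estimate via Proposition \ref{weights} showing that $c\alpha$ cannot be a weight of $\mathrm{Lie}(U_A)$ for $c\ge p^2$ (the paper reduces every root to $\alpha_1$ by $W$-stability of the weights and checks $p^2\alpha_1\not\le 2(p-1)\rho$ from the coefficient $(p-1)(h-1)$ of $\alpha_1$ in $2(p-1)\rho$; your choice of $w$ with $w\alpha=-\theta$ yields the identical bound $c\le (p-1)(n-1)<p^2$), followed by an application of the preceding theorem. The $p\le 3$ caveat you flag is genuine, but the paper's own proof has exactly the same issue --- it invokes the preceding theorem (stated for $p>3$) without comment --- so your explicit identification of the residual small cases ($SL_4$ for $p=2,3$ and $SL_5$ for $p=3$) makes your treatment, if anything, more careful than the original.
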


\begin{proof}
Let $T$ be the subgroup of diagonal matrices, $B$ the subgroup of upper-triangular matrices, and $\alpha_1$ the first simple root in the standard ordering.  It is easy to check that
$$2(p-1)\rho = (p-1)\sum_{\alpha \in \Phi^+} = (p-1)(h-1)\alpha_1 + \sum_{i=2}^{n-1} c_i\alpha_i.$$
If $p \ge h-1$, then
$$p^2 \ge p(h-1) > (p-1)(h-1).$$
But also, if $p=h-2$, we have
$$p^2 = (h-2)^2 > (h-3)(h-1) = (p-1)(h-1).$$
Therefore we see that $p^2\alpha_1 \not\le 2(p-1)\rho$, hence cannot be a weight of $\text{Lie}(U_A)$ by Proposition \ref{weights}.  Since every root is in the $W$-orbit of $\alpha_1$, and the weights of $\text{Lie}(U_A)$ are stable under $W$, we see that the same holds for every other root.  Applying the previous theorem then completes the proof.
\end{proof}

\begin{remark}
Note that if $G=SL_n$ and $p > h$, then Theorems \ref{Wlifts} and \ref{SLnLevi} show that $Q_1(\lambda)$ can be lifted both to $G_{(1)}N_G(T)$ and to $G_{(1)}L_{\alpha_1}$, and moreover these lifts can be made to be compatible, in the sense that both give lifts to $G_{(1)}(N_G(T) \cap L_{\alpha_1})$, and we can conjugate the images of these lifts in $GL(Q_1(\lambda))$ to agree on this subgroup (by the uniqueness part of Theorem \ref{Wlifts}).  As $U_{\alpha_1}$ and $N_G(T)$ generate $G$, this is more evidence suggesting that $Q_1(\lambda)$ can be lifted to $G$, at least when $p>h$.
\end{remark}

\end{document}